\titleformat{\section}{\large\bfseries\center}{\thesection}{1em}{}
\titleformat{\subsection}[runin]{\bfseries}{\thesubsection}{1em}{}
\newtheorem{teo}{Theorem}[section]
\newtheorem{lem}[teo]{Lemma}
\newtheorem{cor}[teo]{Corollary}
\newtheorem{pro}[teo]{Proposition}
\newtheorem{claim}{Claim}
\newtheorem{obs}{Observation}
\newtheorem{ex}{Example}
\newtheorem{defintro}{Definition} 
\numberwithin{equation}{section} 
\newenvironment{proof}{\trivlist\item[\hskip \labelsep{\emph{\textbf{Proof}}}:]}{\nopagebreak \hfill $\Box$ \endtrivlist}
\newenvironment{proofclaim}{\trivlist\item[\hskip \labelsep{\emph{\textbf{Proof of the Claim}}}:]}{\nopagebreak \hfill $\Box$ \endtrivlist}
    \newcommand{\Z}{\mathbb{Z}}     
    \newcommand{\R}{\mathbb{R}}     
    \def\S{\mathbb{S}}
    \def\k{\mathfrak{K}}
    \def\K{\mathcal{K}}
    \def\sig{\Sigma}
    \def\s2{\S^2}
    \def\r3{\mathbb{R}^3}
    \def\t{\theta}
    \def\hs5{\hspace{.5cm}}
\pgfplotsset{compat=1.12}  
\renewcommand*\env@matrix[1][*\c@MaxMatrixCols c]{%
  \hskip -\arraycolsep
  \let\@ifnextchar\new@ifnextchar
  \array{#1}}
\begin{document}
\thispagestyle{empty}

\begin{center}

\renewcommand{\thefootnote}{\,}
{\fontsize{16}{19}\hspace{.5cm} \textbf{Rotational surfaces of prescribed Gauss curvature in $\mathbb{R}^3$} 
\footnote{
\hspace{.15cm}\emph{Mathematics Subject Classification:} 53A10, 53C42, 34C05, 34C40\\
\emph{Keywords}: Prescribed Gauss curvature; radial solution; nonlinear autonomous system; asymptotic behavior}}\\
\vspace{0.5cm} { Antonio Bueno$^\dagger$, Irene Ortiz$^\ddagger$}\\
\end{center}
\vspace{.5cm}
$^\dagger$Departamento de Ciencias, Centro Universitario de la Defensa de San Javier, E-30729 Santiago de la Ribera, Spain. \\ \vspace{.3cm}
\emph{E-mail address:} antonio.bueno@cud.upct.es\\
$^\ddagger$Departamento de Ciencias, Centro Universitario de la Defensa de San Javier, E-30729 Santiago de la Ribera, Spain. \\ \vspace{.3cm}
\emph{E-mail address:} irene.ortiz@cud.upct.es

\begin{abstract}
We study rotational surfaces in Euclidean 3-space whose Gauss curvature is given as a prescribed function of its Gauss map. By means of a phase plane analysis and under mild assumptions on the prescribed function, we generalize the classification of rotational surfaces of constant Gauss curvature; exhibit examples that cannot exist in the constant Gauss curvature case; and analyze the asymptotic behavior of strictly convex graphs. We also prove the existence of singular radial solutions intersecting orthogonally the axis of rotation.
\end{abstract}

\section{Introduction}
One of the most outstanding problems in Differential Geometry in the large is the \emph{Minkowski problem for ovaloids} \cite{Min}, whose formulation is stated next:
\begin{quote}
\emph{Let be $\K\in C^1(\S^2),\ \K>0$. Determine the existence of an ovaloid $\sig$ whose Gauss curvature $K_\sig$ is given at each $p\in\sig$ by}
\end{quote}
\begin{equation}\label{defKsup}
K_\sig(p)=\K(\eta_p)
\end{equation}
\begin{quote}
\emph{where $\eta:\sig\rightarrow\S^2$ is the Gauss map.}
\end{quote}
In the words of Eugenio Calabi: \emph{"From the geometric view point it [the Minkowski problem] is the Rosetta Stone, from which several related problems can be solved"}. As Calabi realized, the Minkowski problem has been the cornerstone thanks to which many researches have culminated spectacular advances in many fields. We highlight the works of Minkowski, Alexandrov, Nirenberg, Pogorelov, S.T. Yau and S.Y. Chen \cite{Min,Ale,Nir,Pog1,ChYa}. A necessary and sufficient condition for the existence of such an ovaloid is that $\K>0$ satisfies the following integral condition:
\begin{equation}\label{condicionMinkowski}
\int_{\S^2}\frac{X}{\K(X)}=0.
\end{equation}

In general, the study of surfaces in $\R^3$ defined by a prescribed curvature function in terms of its Gauss map goes back, at least, to the Minkowski and Christoffel problems \cite{Chr}, where in the latter the mean of the curvature radii of the surface are prescribed, i.e. $1/\kappa_1+1/\kappa_2=2\K(\eta)$.

Among the curvature functions that can be prescribed, another having a paramount importance is the mean curvature. The existence and uniqueness of prescribed mean curvature ovaloids has been addressed by many researches, among which we highlight Alexandrov, Pogorelov, Hartman, Wintner, B. Guan, P. Guan, Gálvez and Mira \cite{Ale,Pog2,HaWi,GuGu,GaMi1}. However, the global geometry of complete, non-compact examples was largely unexplored until the first author jointly with Gálvez and Mira \cite{BGM1,BGM2} started to develop \emph{the global theory of surfaces of prescribed mean curvature}, making special emphasis on its relation with constant mean curvature surfaces and the translating solitons of the mean curvature flow. In \cite{BGM1} the authors focused on the structure of properly embedded surfaces, while in \cite{BGM2} the line of inquiry was the description of rotational examples.

In the same spirit as in the development of the theory of prescribed mean curvature surfaces, our goal in this paper is to further analyze the global structure of surfaces satisfying Eq. \eqref{defKsup}, taking as starting point the well-known theory of surfaces of constant Gauss curvature. Specifically, in this paper we focus on the existence and classification of rotational examples, provided that $\K\in C^1(\S^2)$ is \emph{rotationally symmetric}, that is
\begin{equation}\label{dependenciarotacional}
\K(X)=\k(\langle X,e_3\rangle),\hspace{.5cm} \forall X\in\S^2,
\end{equation}
for some $\k\in C^1([-1,1])$. In this case, the following definition arises:
\begin{defintro}\label{def:ksup}
An immersed, oriented surface $\sig$ in $\R^3$ with Gauss map $\eta$ is a $\k$-surface if
\begin{equation}\label{defKsuprot}
K_\sig(p)=\k(\langle\eta_p,e_3\rangle),\hspace{.5cm} \forall p\in\sig.
\end{equation}
\end{defintro}
The quantity $\langle\eta_p,e_3\rangle$ is the so-called \emph{angle function}. Note that Euclidean translations, rotations around vertical lines and reflections respect vertical planes send $\k$-surfaces into $\k$-surfaces. Any such isometry keeps invariant the angle function, hence Eq. \eqref{defKsuprot} holds. In particular, the notion of rotational $\k$-surface is well-defined. Also, if $\sig$ is a $\k$-surface with Gauss map $\eta$ and $\Phi$ is a horizontal reflection, then $\sig^*=\Phi(\sig)$, endowed with $-\eta^*=-d\Phi(\eta)$ as Gauss map, is also a $\k$-surface, since $\langle\eta,e_3\rangle=\langle-\eta^*,e_3\rangle$. Note that changing $\eta^*$ into $-\eta^*$ does not change the Gauss curvature, which is a paramount difference when prescribing the mean curvature.

The most studied case of $\k$-surfaces is, of course, when the function $\k$ is constant. The classification of rotational surfaces of constant Gauss curvature $K\in\R$ is a well-known result, for whose proof three different cases are analyzed: $K=0,\ K<0$ and $K>0$; see e.g. \cite{Car}. Going back to $\k$-surfaces, in such generality for $\k$, it seems hopeless to find an explicit description of the corresponding examples. We overcome this difficulty by treating the ODE fulfilled by the profile curve of a rotational $\k$-surface as a non-linear autonomous system, and derive a qualitative study of its solutions by means of a phase plane analysis. In this way, we will show that under mild assumptions on the prescribed function $\k$, the rotational $\k$-surfaces follow a pattern that resemble us to the surfaces of constant Gauss curvature. However, different analytic behaviors of $\k$ will induce different global behaviors on the class of $\k$-surfaces. As a matter of fact, we construct some examples that do not exist in when the Gauss curvature is constant; for example, complete $\k$-surfaces of strictly negative Gauss curvature.

We next detail the organization of the paper and highlight some of the main results.

In Section \ref{sec:phaseplane}, we introduce the phase plane of the solutions of the ODE fulfilled by the profile curve of a rotational surface satisfying \eqref{defKsuprot}. First, in Section \ref{subsec:diffeq}, we derive those differential equations and deduce the non-linear autonomous system defined by them. The space of solutions of this differential system allows us to properly define the phase plane in Section \ref{subsec:phaseplane}, where we exhibit its main properties. In Section \ref{subsec:radialsols}, we address the problem of proving the existence of a radial graph intersecting orthogonally the axis of rotation that solves Eq. \eqref{defKsuprot}. Since the differential equations are singular at the axis of rotation, standard theory cannot be invoked in order to ensure the existence such a graph. Instead, we apply a fixed point argument on a differential operator to overcome these difficulties.

In Section \ref{sec:rotacionales}, we classify rotational $\k$-surfaces by analyzing their geometric properties through the qualitative study of the solutions of the phase plane. In the same fashion as for surfaces of constant Gauss curvature, we distinguish cases on $\k$: in Section \ref{subsec:kanula}, we assume that $\k$ vanishes at some point; in Section \ref{subsec:kneg}, we assume that $\k$ is negative; and in Section \ref{subsec:kpos}, we assume that $\k$ is positive. Under mild hypotheses on $\k$, the examples obtained are a generalization of their constant Gauss curvature counterparts. We also prove the existence of $\k$-surfaces that do not exist when $\k$ is constant, such as complete graphs that are either entire or asymptotic to a vertical cylinder, or $\k$-surfaces of strictly negative Gauss curvature.

Finally, in Section \ref{sec:asympt}, we study the asymptotic behavior of the complete graphs defined in Section \ref{subsec:kanula}. First, we deduce that polynomials $x^n$ rotated around the $z$-axis generate $\k$-surfaces for adequate choices of $\k$, in terms of $n$. Then, we prove that depending on how $\k$ vanishes at the origin, a complete graph intersecting orthogonally the axis of rotation can either converge to a vertical cylinder or be entire.

\section{A phase plane study}\label{sec:phaseplane}
Throughout this paper we consider $\K\in C^1(\S^2)$ and $\k\in C^1([-1,1])$ related by \eqref{dependenciarotacional}. In the present section we deduce the differential equations fulfilled by a rotational $\k$-surface around the $z$-axis and introduce the phase plane, the main tool to derive the qualitative properties of their solutions.
\subsection{The differential equations.}\label{subsec:diffeq}

Given an arc-length parametrized curve $\alpha(s)=(x(s),0,z(s))$, $s\in I\subset\R$, we parametrize a rotational surface $\sig$ around the $z$-axis, $\{(0,0,t);\ t\in\R\}$, by
$$
\psi(s,\theta)=(x(s)\cos\theta,x(s)\sin\theta,z(s)): I\times(0,2\pi)\rightarrow\R^3.
$$
Up to a change of the orientation, the angle function is given by $\nu(\psi(s,\theta))=x'(s)$, and the principal curvatures of $\sig$ are
\begin{equation}\label{princcurv}
\kappa_1=\kappa_\alpha(s)=x'(s)z''(s)-x''(s)z'(s),\hs5 \kappa_2=\frac{z'(s)}{x(s)},
\end{equation}
where $\kappa_\alpha(s)$ is the curvature of $\alpha(s)$ as a planar curve. The arc-length condition $x'(s)^2+z'(s)^2=1$ ensures the existence of a $2\pi$-periodic function $\theta:I\rightarrow\R$ such that $x'(s)=\cos\theta(s),\ z'(s)=\sin\theta(s)$. Geometrically, $\theta(s)$ is the angle between $\alpha'(s)$ and $e_1$. Therefore, from Eq. \eqref{princcurv} we deduce that the following system holds
\begin{equation}\label{sistemadiferencial}
\left\lbrace\begin{array}{l}
\vspace{.25cm} x'(s)=\cos\theta(s)\\
\vspace{.25cm} z'(s)=\sin\theta(s)\\
\theta'(s)=\displaystyle{\frac{x(s)\k(\cos\theta(s))}{\sin\theta(s)}},
\end{array}\right.
\end{equation}
on every $J\subset I$ where $\theta(s)\neq n\pi,\ n\in\Z$.


Note that if $(x(s),z(s),\theta(s))$ solves system \eqref{sistemadiferencial}, then $(x(s),-z(s),2\pi-\theta(s))$ is also a solution. So, we can restrict the image of $\theta(s)$ to lie in $(0,\pi)$ and as a consequence $z'(s)>0$, i.e. the height function of $\alpha(s)$ is always increasing. Hence, bearing in mind that $z'$ is defined by means of $x$ and $\theta$, \eqref{sistemadiferencial} can be simplified as the autonomous ODE system given by 
\begin{equation}\label{1ordersys}
	\left(\begin{array}{c}
		x(s)\\
		\theta(s)
	\end{array}\right)'=\left(\begin{array}{c}
		\cos\theta(s)\\
		\displaystyle{\frac{x(s)\k(\cos\theta(s))}{\sin\theta(s)}}
	\end{array}\right),\quad \textrm{with}\; \theta(s)\in(0,\pi).
\end{equation}

\subsection{The phase plane.}\label{subsec:phaseplane}
The phase plane of \eqref{sistemadiferencial} is $\Theta:=[0,\infty)\times(0,\pi)$ with coordinates denoting, respectively, the distance to the axis of rotation and the angle of $\alpha'(s)$ with the $e_1$-direction. The orbits $\gamma(s)=(x(s),\theta(s))$ are the solutions of \eqref{sistemadiferencial}. 


As a first approach to the understanding of the phase plane, we compile some of its main properties.

\begin{lem}\label{lemapropiedades}
The following properties are immediate from the definition of the phase plane.
\begin{enumerate}
\item If $\k(\cos\t_0)=0,\ \t_0\neq\pi/2$, the orbit $(s,\t_0),\ s\geq0$, corresponds to a flat cone of constant angle function $\cos\t_0$. 
\item If $\k(0)=0$, the constant orbit $(x_0,\pi/2),\ x_0>0$ corresponds to the flat cylinder of radius $x_0$ and vertical rulings. In this case, the set $E_0=\{(x_0,\pi/2),\ x_0>0\}$ will be referred as the \emph{equilibria of system \eqref{1ordersys}}.
\item If $\k(1)=0$ (resp. $\k(-1)=0$), horizontal planes with upwards (resp. downwards) orientation are $\k$ surfaces. The curve $(s,0)$ (resp. $(s,\pi)$) for $s\geq0$ in the boundary of $\Theta$ will be regarded as the orbit corresponding to such horizontal plane. Moreover, since $\k$ has $C^1$-regularity, no orbit in $\Theta$ can intersect the boundary component $(s,0)$ (resp. $(s,\pi)$), $s>0$.
\item For each $(x_0,\t_0)\in\Theta$, the Cauchy problem of \eqref{1ordersys} for this initial data has existence and uniqueness. Therefore, the orbits provide a foliation by regular $C^1$ curves of $\Theta$ (or $\Theta-E_0$, if $E_0\neq\varnothing$). As a matter of fact, if an orbit $\gamma(s)$ converges to some $e_0\in E_0$ then $s\rightarrow\pm\infty$.
\end{enumerate}
\end{lem}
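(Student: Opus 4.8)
The plan is to handle the four assertions by a single mechanism: for each I would exhibit the proposed orbit, verify by direct substitution that it solves the autonomous system \eqref{1ordersys}, recover the profile curve $\alpha(s)=(x(s),0,z(s))$ from the relations $x'=\cos\t$, $z'=\sin\t$, and read off the rotational surface it generates together with its constant angle function $\nu=\cos\t$. Since \eqref{1ordersys} was derived precisely so that the Gauss curvature of the generated surface equals $\k(\cos\t)$ along any solution (recall from \eqref{princcurv} that $\kappa_1=\t'$ and $\kappa_2=\sin\t/x$, whence $K_\sig=\t'\sin\t/x=\k(\cos\t)$), flatness of the surface is equivalent to the vanishing condition on $\k$ stated in each item.

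For item (i) I take $\t(s)\equiv\t_0$ with $\t_0\neq\pi/2$; the second component of \eqref{1ordersys} then equals $x\,\k(\cos\t_0)/\sin\t_0$, and since $\sin\t_0\neq0$ this is identically zero (as required for a constant $\t$) exactly when $\k(\cos\t_0)=0$. With $\t$ constant, $x'=\cos\t_0\neq0$, so $\alpha$ is a straight segment issuing from the axis; its rotation is a flat cone with constant angle function $\cos\t_0$, and using $x$ as parameter identifies the orbit as $(s,\t_0)$, $s\geq0$. Item (ii) is the degenerate case $\t_0=\pi/2$: now $x'=\cos(\pi/2)=0$ forces $x\equiv x_0$, while $\t'=x_0\,\k(0)$ vanishes iff $\k(0)=0$, so $(x_0,\pi/2)$ is a genuine equilibrium; the profile is the vertical line $x\equiv x_0$ with $z'=1$, whose rotation is the cylinder of radius $x_0$, and collecting these gives $E_0$. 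For the first half of item (iii), the boundary values $\t\equiv0$ and $\t\equiv\pi$ give $z'=\sin\t=0$, so $\alpha$ is horizontal and its rotation is a horizontal plane with Gauss map $\nu=\cos\t=\pm1$ (upward, resp.\ downward); this is flat, hence a $\k$-surface, exactly when $\k(\pm1)=0$.

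The hard part is the ``Moreover'' clause of item (iii), and here I would argue via uniqueness up to the boundary. Assuming $\k(1)=0$ (the case $\k(-1)=0$ is symmetric), I claim the a priori singular term $h(\t):=\k(\cos\t)/\sin\t$ extends across $\t=0$ to a locally Lipschitz (indeed $C^1$) function. The mechanism is the order of vanishing: since $\k\in C^1$ and $\k(1)=0$, Taylor's theorem gives $\k(\cos\t)=\k'(1)(\cos\t-1)+o(\cos\t-1)=-\tfrac12\k'(1)\t^2+o(\t^2)$, while $\sin\t\sim\t$, so the quotient behaves like $-\tfrac12\k'(1)\,\t+o(\t)$ and the singularity cancels; the same expansion controls $h'$, giving $h'(\t)\to-\tfrac12\k'(1)$. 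Consequently the right-hand side of \eqref{1ordersys} extends to a locally Lipschitz field on a neighbourhood of the line $\{\t=0\}$, that line is invariant (its $\t$-component is $x\,h(0)=0$) and is precisely the orbit of the horizontal plane, and by uniqueness no other orbit can meet it. In particular an interior orbit cannot pass through $(x_1,0)$ with $x_1>0$, for otherwise it and the boundary orbit would be two solutions through the same point.

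Finally, item (iv) is the standard qualitative package. On the open strip $\t\in(0,\pi)$ one has $\sin\t\neq0$, so the field in \eqref{1ordersys} is $C^1$; Picard--Lindel\"of then yields local existence, uniqueness and $C^1$ dependence on initial data, and the resulting non-crossing $C^1$ integral curves foliate $\Theta$ (or $\Theta-E_0$). The last assertion is the classical fact that a non-constant solution cannot reach an equilibrium in finite parameter: if $\gamma(s_*)=e_0\in E_0$ for some finite $s_*$, then $\gamma$ and the constant solution $e_0$ would be two solutions through $e_0$, contradicting uniqueness; hence any orbit converging to $E_0$ does so only as $s\to\pm\infty$. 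Throughout, the single genuine obstacle is the boundary-regularity step in item (iii), where the factor $1/\sin\t$ must be tamed by the vanishing of $\k$ at $\pm1$ in order to license a uniqueness argument right up to $\partial\Theta$.
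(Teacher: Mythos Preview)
The paper offers no proof of this lemma; it is stated as ``immediate from the definition of the phase plane'' and the text moves directly to Proposition~\ref{comportamientoorbita}. Your argument is correct and supplies exactly the verifications the paper suppresses, and you correctly identify that the only step with any content is the ``Moreover'' clause of item~(iii). One small refinement: your claim that ``the same expansion controls $h'$'' is true but slightly glib; the clean way to see that $h(\t)=\k(\cos\t)/\sin\t$ extends to a $C^1$ function near $\t=0$ is to write $\k(y)=(y-1)g(y)$ with $g$ continuous (since $\k\in C^1$ and $\k(1)=0$), and then observe that $h(\t)=-g(\cos\t)\tan(\t/2)$ and $h'(\t)=-\k'(\cos\t)+g(\cos\t)\cos\t/(1+\cos\t)$, both manifestly continuous across $\t=0$. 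This makes the Lipschitz extension, hence the Picard--Lindel\"of uniqueness step, fully rigorous with only the stated $C^1$-regularity of $\k$.
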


Next we focus on the motion of an orbit in $\Theta$. Observe that $\gamma'(s_0)$ is vertical if and only if $\theta(s_0)=\pi/2$. In particular, $\gamma$ can be written as a vertical graph $\theta=\theta(x)$ outside the line $\theta=\pi/2$. The lines $\theta=\theta_0$ such that $\theta_0=\pi/2$ or $\k(\cos\theta_0)=0$ divide $\Theta$ into connected components where the coordinates of an orbit are monotonous functions. Hence, at each of these \emph{monotonicity regions}, the motion of $\gamma$ is uniquely determined as detailed next:

\begin{pro}\label{comportamientoorbita}
In the above conditions, for any $(x_0,\theta_0)\in\Theta-E_0,\ \k(\cos\t_0)\neq0$, the following properties hold:
\begin{itemize}
\item[1.] If $\theta_0 \in(0,\pi/2)$ and $\k(\cos\theta_0)>0$ (resp. $\k(\cos\theta_0)<0$), then $\theta(x)$ is strictly increasing (resp. decreasing) at $x_0$.
\item[2.] If $\theta_0 \in(\pi/2,\pi)$ and $\k(\cos\theta_0)>0$ (resp. $\k(\cos\theta_0)<0$), then $\theta(x)$ is strictly decreasing (resp. increasing) at $x_0$.
\item[3.] If $\theta_0=\pi/2$, then $\gamma$ is orthogonal to the line $\theta=\theta_0$.
\end{itemize}
\end{pro}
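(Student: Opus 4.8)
The plan is to express each orbit locally as a vertical graph $\theta=\theta(x)$ and to read off the monotonicity of $\theta(x)$ directly from the sign of $\der[\theta]{x}$. As already observed before the statement, outside the line $\theta=\pi/2$ one has $x'(s)=\cos\theta(s)\neq0$, so the chain rule applies and this graph representation is legitimate in a neighbourhood of $(x_0,\theta_0)$ whenever $\theta_0\neq\pi/2$.

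First I would compute the slope of the orbit by dividing the two components of \eqref{1ordersys}:
$$\der[\theta]{x}=\frac{\theta'(s)}{x'(s)}=\frac{x\,\k(\cos\theta)}{\sin\theta\,\cos\theta}.$$
Since $(x_0,\theta_0)\in\Theta-E_0$ is an interior point we have $x_0>0$, and since $\theta_0\in(0,\pi)$ we have $\sin\theta_0>0$. Hence the sign of $\der[\theta]{x}$ at $x_0$ coincides with the sign of the quotient $\k(\cos\theta_0)/\cos\theta_0$, and because $\k(\cos\theta_0)\neq0$ is assumed, this slope is genuinely nonzero, so the resulting monotonicity is strict.

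From here the three items follow by an elementary sign analysis. For item 1, $\theta_0\in(0,\pi/2)$ gives $\cos\theta_0>0$, so the slope inherits the sign of $\k(\cos\theta_0)$: positive $\k$ yields $\theta$ strictly increasing, negative $\k$ strictly decreasing. For item 2, $\theta_0\in(\pi/2,\pi)$ gives $\cos\theta_0<0$, which flips the sign, so positive $\k$ now yields $\theta$ strictly decreasing and negative $\k$ strictly increasing. Item 3 is best handled directly rather than through the graph: at $\theta_0=\pi/2$ one has $x'(s_0)=\cos(\pi/2)=0$ while $\theta'(s_0)=x_0\,\k(0)\neq0$, so the tangent vector $\gamma'(s_0)=(0,\theta'(s_0))$ is vertical in $\Theta$ and therefore orthogonal to the horizontal line $\theta=\pi/2$.

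I do not expect a genuine obstacle here, since the statement reduces to tracking two signs. The only point deserving a word of care is the passage to the graph $\theta=\theta(x)$, which degenerates exactly on $\theta=\pi/2$; this is entirely consistent with item 3, where $\der[\theta]{x}$ blows up and the orbit crosses $\theta=\pi/2$ with a vertical tangent.
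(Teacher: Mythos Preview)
Your argument is correct and is precisely the computation one would expect; in fact the paper states this proposition without proof, treating it as immediate from the form of system \eqref{1ordersys}, so your write-up simply spells out what the authors left implicit.

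One small remark: your justification ``since $(x_0,\theta_0)\in\Theta-E_0$ is an interior point we have $x_0>0$'' is not literally correct, because $\Theta=[0,\infty)\times(0,\pi)$ contains the segment $x=0$. At $x_0=0$ the slope $\der[\theta]{x}$ vanishes and the strict monotonicity claim in Items~1 and~2 degenerates (and at $(0,\pi/2)$ the orbit is the constant solution, cf.\ Claim~1 in the proof of Theorem~\ref{clasificacionKnegativa}). The proposition is really intended for $x_0>0$, which is how it is used throughout the paper, so this is a cosmetic issue rather than a genuine gap.
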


%

The following result discusses whether an orbit $\gamma(s)$ can \emph{diverge} in $\Theta$.

\begin{pro}\label{orbitaescapa}
Let be $\k\in C^1([-1,1])$, $\gamma(s)=(x(s),\t(s))$ an orbit in $\Theta$ and suppose that $x(s)\rightarrow\infty$ and $\t(s)\rightarrow\t_0\in[0,\pi]$ as $s\rightarrow\pm\infty$. Then, $\k(\cos\t_0)=0$.
\end{pro}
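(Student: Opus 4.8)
The plan is to argue by contradiction. Assuming $\k(\cos\t_0)\neq0$, I will show that the divergence $x(s)\to\infty$ forces $\t(s)$ to leave every bounded neighbourhood of $\t_0$, contradicting $\t(s)\to\t_0$. Throughout I treat the limit $s\to+\infty$; the case $s\to-\infty$ follows verbatim after the time reversal $s\mapsto-s$, which merely changes the global sign of both components of \eqref{1ordersys}, hence the direction of monotonicity but not the growth estimates, and which interchanges the roles of the boundary values $\k(1)$ and $\k(-1)$.

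First I would pin down the possible location of $\t_0$. Since $x'(s)=\cos\t(s)\to\cos\t_0$ while $x(s)\to\infty$, integration shows $\cos\t_0\geq0$: otherwise $x'(s)<0$ for large $s$ and $x$ would be eventually decreasing. Hence $\t_0\in[0,\pi/2]$, and I split into the interior case $\t_0\in(0,\pi/2]$ and the boundary case $\t_0=0$.

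In the interior case $\sin\t_0>0$, so by continuity the ratio $\k(\cos\t(s))/\sin\t(s)$ tends to $c:=\k(\cos\t_0)/\sin\t_0\neq0$. Choosing $s_0$ beyond which this ratio keeps the sign of $c$ with $\lvert\cdot\rvert>\lvert c\rvert/2$, and $x(s)>1$, the second equation of \eqref{1ordersys} gives $\t'(s)$ a fixed sign with $\lvert\t'(s)\rvert>\tfrac{\lvert c\rvert}{2}x(s)$. Integrating, $\lvert\t(s)-\t(s_0)\rvert>\tfrac{\lvert c\rvert}{2}\int_{s_0}^{s}x(t)\,\df t\to\infty$ because $x>1$, contradicting the boundedness of $\t$. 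Thus $\k(\cos\t_0)=0$.

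The boundary case $\t_0=0$ (so $\cos\t_0=1$, $\sin\t_0=0$) is the delicate one, and I expect it to be the main obstacle, since the coefficient $1/\sin\t$ now blows up rather than staying bounded, so the sign bookkeeping must be done carefully. Suppose $\k(1)\neq0$. If $\k(1)>0$, then $\k(\cos\t(s))>0$ near the limit, whence $\t'>0$ eventually; but an increasing $\t$ cannot decrease to $0$, a contradiction. If $\k(1)<0$, I would exploit the blow-up through an energy-type estimate: using $\sin\t\leq\t$ and $\k(\cos\t(s))\leq\k(1)/2<0$ for large $s$, one gets $\t\,\t'\leq\tfrac{\k(1)}{2}x$, that is $\bigl(\t^2/2\bigr)'\leq\tfrac{\k(1)}{2}x$; integrating and invoking $\int_{s_0}^{s}x\,\df t\to\infty$ forces $\t(s)^2\to-\infty$, which is absurd (geometrically, $\t$ would be driven across the boundary $\{\t=0\}$ in finite parameter, well before $x$ reaches infinity). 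Hence $\k(1)=0$, completing the argument.
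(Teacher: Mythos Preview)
Your argument is correct, and it reaches the conclusion by a route that differs from the paper's. The paper reparametrizes by $x$: once the orbit lies in a monotonicity region it writes $\theta=\theta(x)$, obtains via the chain rule
\[
\theta'(x)\cos\theta(x)=\frac{x\,\k(\cos\theta(x))}{\sin\theta(x)},
\]
and then lets $x\to\infty$; since $\theta(x)\to\theta_0$ monotonically, the mean value theorem forces $\theta'(x)\to 0$ along a sequence, so the left-hand side tends to $0$ while the right-hand side would diverge unless $\k(\cos\theta_0)=0$. In contrast, you stay in the $s$-parameter, derive a pointwise lower bound $|\theta'(s)|\geq c\,x(s)$ (or the quadratic bound $(\theta^2)'\leq \k(1)\,x$ at the boundary), and integrate the resulting differential inequality, using $\int x\,ds=\infty$, to produce the contradiction. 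Your approach costs a case split, but it makes the boundary cases $\theta_0\in\{0,\pi\}$ fully explicit, whereas in the paper's limiting argument the quotient $\k(\cos\theta_0)/\sin\theta_0$ is formally indeterminate there and one has to read the divergence more carefully. Both methods ultimately exploit the same mechanism---that a nonzero value of $\k(\cos\theta_0)$ makes $|\theta'|$ grow at least linearly in $x$---but yours packages it as an integral estimate rather than a limit of the ODE.
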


\begin{proof}
We suppose $s\rightarrow\infty$, since the case $s\rightarrow-\infty$ is analogous. From the monotonicity properties of $\Theta$ we deduce the existence of some $s_0$ such that for $s\geq s_0$, $\gamma(s)$ is contained in some monotonicity region of $\Theta$. At this monotonicity region we can express $\theta$ as a function of $x$, $\theta(x)$. The chain rule yields
\begin{equation}\label{eqthetagrafo}
\t'(x)\cos\t(x)=\frac{d\t}{dx}\frac{dx}{ds}=\frac{d\t}{ds}=\t'(s)=\frac{x\k(\cos\t(x))}{\sin\t(x)}.
\end{equation}
On the one hand, by hypothesis, when $x\rightarrow\infty$ we have $\theta(x)\rightarrow\t_0$ and by the mean value theorem and by monotonicity $\t'(x)\rightarrow0$. On the other hand, after making $x\rightarrow\infty$ in Eq. \eqref{eqthetagrafo} we conclude
$$
0=\frac{''\infty''\k(\cos\t_0)}{\sin\t_0}.
$$
So, $\k(\cos\t_0)=0$ in order for this limit to be well-defined.
\end{proof}

\subsection{Existence of radial solutions.}\label{subsec:radialsols}
We finish this section by studying $\k$-surfaces intersecting the axis of rotation. Let be $\t_0\in[0,\pi]$. If $\t_0\neq0,\pi$, then the existence and uniqueness of the Cauchy problem for system \eqref{sistemadiferencial} yields the existence of an orbit having the point $(0,\t_0)$ as endpoint. This orbit generates a profile curve that intersects the axis of rotation at a cusp point with angle function equal to $\cos\t_0$. In particular, the surface is not complete since it fails to be $C^1$ at this intersection point.

The two missing points of the phase plane are $(0,0)$ and $(0,\pi)$. An orbit having any of such points as endpoint corresponds to a $\k$-surface intersecting orthogonally the axis of rotation. The existence of such orbits follows easily from the existence of a rotational graph satisfying \eqref{defKsuprot}. We prove the existence of such a graph next.

Let us express a rotational $\k$-surface as the radial graph
\begin{equation}\label{paramgraforot}
(x\cos\theta,x\sin\theta,u(x)),\ x>0.
\end{equation}
With this parametrization, the angle function is $1/\sqrt{1+u'(x)^2}$ and Eq. \eqref{defKsuprot} writes as
\begin{equation}\label{kcomografo}
\frac{u''u'}{x(1+u'^2)^2}=\k\left(\frac{1}{\sqrt{1+u'^2}}\right).
\end{equation}
Multiplying by $x$ and after integration, this equation transforms into
$$
\displaystyle{\left(\frac{u'^2}{1+u'^2}\right)}'=2x\k\left(\frac{1}{\sqrt{1+u'^2}}\right)
$$
The existence of a rotational $\k$-surface intersecting orthogonally the axis of rotation is equivalent to establish the existence of a classical solution of
\begin{equation}\label{eqsingular}
\left\lbrace
\begin{array}{ll}
\displaystyle{\left(\frac{u'^2}{1+u'^2}\right)}'=2x\k\left(\frac{1}{\sqrt{1+u'^2}}\right)&\mathrm{in}\ (0,\delta),\\
u(0)=0,\hspace{.25cm} u'(0)=0,&
\end{array}
\right.
\end{equation}
for some $\delta>0$. First, observe that Eq.  \eqref{eqsingular} is singular at $x=0$, hence we cannot apply standard theory to ensure its existence. Second, the condition $u(0)=0$ is not restrictive at all because our problem is invariant under vertical translations. Finally, the condition $u'(0)=0$ is just the fact that the graph intersects orthogonally the axis of rotation.

\begin{teo}\label{existenciaradial}
Let be $\k\in C^1([-1,1])$ such that $\k(1)\geq0$. The initial value problem \eqref{eqsingular} has a unique solution $u\in C^2([0,\delta]),\ u\geq0$, for some $\delta>0$.
\end{teo}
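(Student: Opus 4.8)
The plan is to convert the singular boundary value problem \eqref{eqsingular} into a fixed-point problem for an integral operator and apply a contraction-mapping (Banach) argument on a suitable complete metric space of functions. The key observation is that although \eqref{eqsingular} is singular at $x=0$, the singularity is integrable: after multiplying by $x$ and integrating once (as already done in the excerpt), the natural unknown is not $u$ itself but the quantity $v(x):=u'(x)^2/(1+u'(x)^2)$, which must satisfy $v(0)=0$ because $u'(0)=0$. I would therefore first recast the problem entirely in terms of $v$, writing the integrated equation as $v(x)=\int_0^x 2t\,\k\big(\sqrt{1-v(t)}\big)\df t$, using the identity $1/\sqrt{1+u'^2}=\sqrt{1-v}$. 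This integral formulation already builds in both initial conditions and removes the apparent singularity, since the factor $2t$ vanishes at the origin.

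Next I would set up the operator $T$ on a closed ball in $C^0([0,\delta])$, say $B=\{v\in C^0([0,\delta]):\ 0\le v(x)\le M x^2,\ v(0)=0\}$ for constants $M,\delta$ to be fixed, by $(Tv)(x)=\int_0^x 2t\,\k\big(\sqrt{1-v(t)}\big)\df t$. The plan is then to verify the two standard hypotheses of the contraction principle. For invariance $T(B)\subseteq B$, I would use that $\k$ is continuous on $[-1,1]$, hence bounded, so that $|(Tv)(x)|\le \|\k\|_\infty x^2$, which gives the required quadratic bound and forces $(Tv)(0)=0$; the sign condition $\k(1)\ge0$ enters here to guarantee $Tv\ge0$ near the axis, where $v$ is small and the argument $\sqrt{1-v}$ is close to $1$. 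For the contraction estimate I would exploit that $\k\in C^1$, so $X\mapsto\k(X)$ is Lipschitz on $[-1,1]$, and that the map $v\mapsto\sqrt{1-v}$ is Lipschitz on the range of $v$ provided we stay away from $v=1$ (which holds for $x$ small since $v\le Mx^2$). Composing these gives a Lipschitz bound for the integrand in $v$, and the extra factor $2t$ integrated over $[0,\delta]$ produces a constant $C\delta^2$ that can be made strictly less than $1$ by shrinking $\delta$.

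Once $T$ has a unique fixed point $v\in B$, I would reconstruct the solution by solving $u'(x)^2/(1+u'(x)^2)=v(x)$ for $u'$, obtaining $u'(x)=\sqrt{v(x)/(1-v(x))}$ (taking the nonnegative root, consistent with the increasing convex graph), and then integrating to recover $u$ with $u(0)=0$. The regularity claim $u\in C^2$ and $u\ge0$ would follow by bootstrapping: since $v\in C^0$ and $v(x)\le Mx^2$, one checks $u'(x)=O(x)$, so $u'$ is continuous with $u'(0)=0$; differentiating the integral expression for $v$ shows $v\in C^1$, whence $u'\in C^1$ and therefore $u\in C^2$, with the original equation \eqref{kcomografo} recovered pointwise on $(0,\delta)$ and extended to $x=0$ by continuity. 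Uniqueness of $u$ follows from uniqueness of the fixed point $v$ together with the fact that the conditions $u(0)=0$, $u'(0)=0$ determine $u$ from $v$ unambiguously.

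I expect the main obstacle to be the interplay between the two competing requirements near $x=0$: the invariance set $B$ must be large enough that $T$ maps into it, yet the functions in $B$ must remain bounded away from $v=1$ so that $\sqrt{1-v}$ and its Lipschitz constant stay controlled and the argument of $\k$ never leaves $[-1,1]$. The quadratic bound $v\le Mx^2$ is precisely the device that reconciles these, but choosing $M$ and $\delta$ in the correct order—first $M$ from $\|\k\|_\infty$, then $\delta$ small enough for both invariance and contraction—requires care. A secondary subtlety is that the sign hypothesis $\k(1)\ge0$ is used only to keep $v\ge0$ (so that $u'$ is real); if $\k(1)<0$ the integrand would be negative for small $x$ and no real graph intersecting the axis orthogonally could exist, which explains why this hypothesis is exactly what is needed and not merely convenient.
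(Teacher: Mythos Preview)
Your approach is essentially the paper's: both recast \eqref{eqsingular} as a fixed-point problem for an integral operator on $C^0([0,\delta])$ and apply the contraction principle for $\delta$ small. The only cosmetic difference is the unknown: you take $v=u'^2/(1+u'^2)$, while the paper takes $g=u'^2$ and writes $(\mathfrak{T}g)(x)=f^{-1}\bigl(\int_0^x t\,\phi(g(t))\,dt\bigr)$ with $f(y)=y/(1+y)$ and $\phi(y)=2\k(1/\sqrt{1+y})$. Since $v=f(g)$, your operator $T$ is conjugate to the paper's $\mathfrak{T}$, and your formulation spares you the check that $f^{-1}$ is Lipschitz near the origin.

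There is, however, a genuine gap in your regularity step and a related misidentification of where $\k(1)\ge 0$ enters. The implication ``$v\in C^1$, whence $u'\in C^1$'' is not automatic: the square root of a nonnegative $C^1$ function vanishing at $0$ need not be $C^1$ there, so $u'=\sqrt{v/(1-v)}$ requires a separate argument at $x=0$. The paper closes this by returning to \eqref{kcomografo} and applying L'H\^opital to obtain $u''(0)^2=\k(1)$; this is exactly where the hypothesis $\k(1)\ge 0$ is decisive, since it makes $u''(0)=\sqrt{\k(1)}$ real and shows $u''$ extends continuously to $x=0$. Your proposal instead locates the role of $\k(1)\ge 0$ in the invariance step, to force $Tv\ge 0$; but note that when $\k(1)=0$ and $\k<0$ in a left neighbourhood of $1$, your set $B=\{0\le v\le Mx^2\}$ is \emph{not} $T$-invariant (for $v>0$ small one gets $Tv<0$), so that argument would need the ball $\{|v|\le Mx^2\}$ anyway, with the sign of the fixed point checked afterwards. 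Inserting the L'H\^opital computation for $u''(0)$ would close the gap and align your use of the hypothesis with the paper's.
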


\begin{proof}
The arguments for the proof of this result are inspired by the study of radial solutions to certain divergence-type equations; see \cite{CCO} and also Prop. 5 in \cite{Lop}. 

We define
$$
f(y)=\frac{y}{1+y},\hspace{.5cm} \phi(y)=2\k\left(\frac{1}{\sqrt{1+y}}\right).
$$
Fix $\delta>0$ to be determined later. A function $u(x)\in C^2([0,\delta])$ is a solution of \eqref{eqsingular} if and only if
$$
(f(u'(x)^2))'=x\phi(u'(x)^2),\hspace{.5cm} u(0)=0,\ u'(0)=0.
$$
If we write $u'(x)^2=g(x)$, then the above equation is
\begin{equation}\label{eqg}
(f(g(x))'=x\phi(g(x)),\hspace{.5cm} g(0)=0.
\end{equation}

After solving $g(x)$ in \eqref{eqg}, we define the operator $\mathfrak{T}$ by
$$
(\mathfrak{T}g)(x)=f^{-1}\left(\int_0^x t\phi(g(t))dt\right).
$$
Note that the existence of a fixed point of $\mathfrak{T}$ is equivalent to the existence of a solution of \eqref{eqg}.

At this point, since $f^{-1}$ and $\phi$ are Lipschitz continuous in $[-\varepsilon,\varepsilon]$ for $0<\varepsilon<1$, arguing as in the proof of Proposition 5 in \cite{Lop} we conclude that $\mathfrak{T}$ is a contraction in the space $(C^0([0,\delta]),||.||_\infty)$, after choosing $\delta>0$ small as necessary. Hence, we ensure the existence of $g\in C^0([0,\delta])$ such that $\mathfrak{T}g=g$.  Now, since Eq.  \eqref{eqg} has a unique $C^1$ solution for the initial data $g(x_0)=g_0,\ x_0>0$, the function $g$ lies in the space $C^0([0,\delta])\cap C^1((0,\delta])$.

Since $u'(x)^2=g(x)$ we conclude that $u'(x)=\pm\sqrt{g(x)}$. We assume $u'(x)=\sqrt{g(x)}$, i.e. $u(x)\geq0$, since the case $u'(x)=-\sqrt{g(x)}$, which leads to $u(x)\leq0$, is similar after a reflection and a change of the orientation. 

At this point, $u(x)$ is a solution of \eqref{eqsingular} provided that it belongs to $C^2([0,\delta])$, i.e. that it has $C^2$-regularity at $x=0$. Taking limit in Eq. \eqref{kcomografo} as $x\rightarrow0$ and applying the L'Hôpital rule yields
$$
u''(0)^2=\k(1).
$$
Since $\k(1)\geq0$, the value $u''(0)$ is well-defined and thus $u\in C^2([0,\delta])$. This proves Th. \ref{existenciaradial}.
\end{proof}

\begin{obs}
For the existence of a radial solution intersecting orthogonally the axis of rotation with unit normal $-e_3$, we argue as before. The only difference is to change the parametrization given by \eqref{paramgraforot} to 
$$
((\delta-x)\cos\t,(\delta-x)\sin\t,u(x)),\hspace{.5cm} x\in[0,\delta],\ u'(\delta)=0,
$$
whose induced normal at the axis of rotation is $-e_3$. Details are skipped.
\end{obs}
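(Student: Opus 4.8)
The plan is to reduce the assertion to Theorem \ref{existenciaradial} by a change of the profile variable that moves the singular endpoint to the origin. With the parametrization $\psi(x,\t)=((\delta-x)\cos\t,(\delta-x)\sin\t,u(x))$ the axis of rotation is reached at $x=\delta$, where the radial distance $\delta-x$ vanishes, so the equation is singular there rather than at $x=0$. Setting $w=\delta-x$ and $\tilde u(w)=u(\delta-w)$, the surface is re-expressed as the standard radial graph $(w\cos\t,w\sin\t,\tilde u(w))$ of \eqref{paramgraforot} in the variable $w\in[0,\delta]$, with its pole at $w=0$ and $\tilde u'(0)=-u'(\delta)=0$.

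First I would confirm the orientation. A direct computation of $\psi_x\times\psi_\t$ (using $r(x)=\delta-x$, so $r'=-1$, $r''=0$) gives the unit normal $\eta=-(1+u'^2)^{-1/2}(u'\cos\t,u'\sin\t,1)$, hence angle function $\langle\eta,e_3\rangle=-1/\sqrt{1+u'^2}$, which equals $-e_3$ at $x=\delta$ as claimed. Because $\delta-x$ is decreasing and the orientation is reversed with respect to \eqref{paramgraforot}, the Gauss curvature carries an extra sign, $K=-u''u'/((\delta-x)(1+u'^2)^2)$. Imposing \eqref{defKsuprot} and passing to the variable $w$ (which flips the sign again through $d/dx=-d/dw$ and leaves $u'^2=\tilde u'^2$ invariant) turns the problem into
\[
\Bigl(\tfrac{\tilde u'^2}{1+\tilde u'^2}\Bigr)'=2w\,\tilde\k\!\left(\tfrac{1}{\sqrt{1+\tilde u'^2}}\right),\qquad \tilde u(0)=0,\ \tilde u'(0)=0,
\]
where $\tilde\k(t):=\k(-t)$; the appearance of $\tilde\k$ is exactly the orientation reversal, which replaces the angle function by its opposite. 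This is verbatim the initial value problem \eqref{eqsingular} for the prescribed function $\tilde\k$.

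Since $\k\in C^1([-1,1])$ we have $\tilde\k\in C^1([-1,1])$ with $\tilde\k(1)=\k(-1)$, so the hypothesis of Theorem \ref{existenciaradial} holds once $\k(-1)\geq0$ --- the natural counterpart of $\k(1)\geq0$, since the angle function at a pole carrying the normal $-e_3$ is $-1$. That theorem then yields a unique $\tilde u\in C^2([0,\delta])$, $\tilde u\geq0$, and undoing the substitution gives $u(x)=\tilde u(\delta-x)\in C^2([0,\delta])$, solving the $-e_3$ problem; the L'Hôpital step at the pole now reads $u''(\delta)^2=\k(-1)$. I expect the only real obstacle to be the sign bookkeeping: correctly obtaining the Gauss-curvature sign under the orientation-reversing, distance-decreasing reparametrization and then tracking it through the substitution, since a slip there would corrupt either $\tilde\k(t)=\k(-t)$ or the required hypothesis $\k(-1)\geq0$. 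The contraction-mapping argument for the operator $\mathfrak{T}$ is then inherited unchanged with $\k$ replaced by $\tilde\k$.
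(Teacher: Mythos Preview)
Your proposal is correct and carries out precisely what the paper suggests (``argue as before'' with the modified parametrization, details skipped): you make the reduction explicit via the substitution $w=\delta-x$, $\tilde u(w)=u(\delta-w)$, $\tilde\k(t)=\k(-t)$, and then invoke Theorem~\ref{existenciaradial} with the hypothesis $\tilde\k(1)=\k(-1)\geq0$. One small remark: the extra sign in $K=-u''u'/((\delta-x)(1+u'^2)^2)$ comes solely from $r'=-1$ in the radial variable (Gauss curvature is orientation-independent), not from the orientation reversal itself; the orientation reversal is what produces the $\tilde\k(t)=\k(-t)$ on the right-hand side---your formulas are correct, only the attribution of that sign should be adjusted.
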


The existence of radial solutions of Eq. \eqref{defKsuprot} provided by Th. \ref{existenciaradial} has the following consequence on the phase plane.
 
\begin{cor}\label{existenciaorbitamasmenos}
Let be $\k\in C^1([-1,1])$. If $\k(1)\geq0$ (resp $\k(-1)\geq0$), there exists an orbit $\gamma_+$ (resp. $\gamma_-$) having the point $(0,0)$ (resp. $(0,\pi)$) as endpoint.
\end{cor}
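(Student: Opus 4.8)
The plan is to reinterpret the analytic statement of Theorem \ref{existenciaradial} inside the phase plane $\Theta$, so that the corollary becomes essentially a change of viewpoint. Assume first $\k(1)\geq0$ and let $u\in C^2([0,\delta])$ be the solution of the initial value problem \eqref{eqsingular} granted by Theorem \ref{existenciaradial}. The rotational surface parametrized by the radial graph \eqref{paramgraforot}, namely $(x\cos\t,x\sin\t,u(x))$, is by construction a $\k$-surface, since the ODE in \eqref{eqsingular} is the integrated form of \eqref{kcomografo} after multiplying by $x$, and \eqref{kcomografo} is equivalent to \eqref{defKsuprot} for a radial graph; moreover $u'(0)=0$ forces this graph to meet the axis of rotation orthogonally.

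First I would pass from this graph to an orbit of the autonomous system \eqref{1ordersys}. Reparametrizing the planar profile curve $x\mapsto(x,0,u(x))$ by arc length through $s(x)=\int_0^x\sqrt{1+u'(t)^2}\,\df t$ and inverting to $x=x(s)$, I obtain an arc-length curve $\alpha(s)=(x(s),0,u(x(s)))$ whose rotation recovers the surface. By the derivation carried out in Section \ref{subsec:diffeq}, the pair $(x(s),\t(s))$, with $\t(s)$ the angle of $\alpha'(s)$ with $e_1$, solves \eqref{1ordersys} and is therefore a genuine orbit $\gamma_+$ in $\Theta$. The identity tying the two descriptions is that the common angle function satisfies $\cos\t(s)=1/\sqrt{1+u'(x)^2}$, equivalently $\tan\t(s)=u'(x)$ on the region where $u'\geq0$. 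Since $u'(0)=0$, this forces $\cos\t\to1$, hence $\t(s)\to0$, while $x(s)\to0$; thus $\gamma_+$ has the boundary point $(0,0)$ as endpoint, as claimed.

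The hard part is precisely what has already been settled in Theorem \ref{existenciaradial}. The system \eqref{1ordersys} is singular at $(0,0)$: both $x$ and $\sin\t$ vanish there, so its second component $x\k(\cos\t)/\sin\t$ is an indeterminate $0/0$, and the existence-uniqueness statement in Lemma \ref{lemapropiedades} applies only to interior points of $\Theta$ and cannot, by itself, produce an orbit emanating from this boundary point. What guarantees that the reparametrized curve truly reaches $(0,0)$—rather than merely accumulating there—is the $C^2$-regularity of $u$ up to $x=0$, encoded in the relation $u''(0)^2=\k(1)$ established in the proof of Theorem \ref{existenciaradial}; this is exactly where the hypothesis $\k(1)\geq0$ enters. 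It then only remains to check that the arc-length reparametrization is admissible near $x=0$ and that the limiting angle is $0$, both of which are immediate from $u\in C^2([0,\delta])$ and $u'(0)=0$.

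Finally, the statement for $\gamma_-$ under $\k(-1)\geq0$ follows the same scheme, now starting from the radial graph with downward unit normal described in the Observation after Theorem \ref{existenciaradial}. For that parametrization the normal at the axis is $-e_3$, so the angle function there equals $\langle-e_3,e_3\rangle=-1$, forcing $\t(s)\to\pi$; the resulting orbit $\gamma_-$ therefore has $(0,\pi)$ as endpoint.
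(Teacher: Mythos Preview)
Your proposal is correct and follows exactly the route the paper intends: the corollary is stated without proof immediately after Theorem \ref{existenciaradial} as a direct consequence of it, and your argument---reparametrizing the radial $C^2$ graph by arc length and reading off $(x(s),\theta(s))\to(0,0)$ from $u'(0)=0$ via $\cos\theta=1/\sqrt{1+u'^2}$---is precisely the translation from the analytic existence result into the phase-plane language that the paper leaves implicit. Your remarks on why the singularity at $(0,0)$ prevents a direct appeal to Lemma \ref{lemapropiedades}, and on how the $\gamma_-$ case follows from the Observation, are also in line with the paper.
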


\section{Rotational $\k$-surfaces}\label{sec:rotacionales}
Once the properties of the phase plane have been introduced, we aim to give a classification of rotational $\k$-surfaces. We divide this study by analyzing three possible behaviors of $\k$: it vanishes at some point, it is positive, or it is negative. These cases are inspired in the classification of rotational surfaces of constant Gauss curvature.

\subsection{Case where $\k$ vanishes.}\label{subsec:kanula} In this section, we assume that $\k\in C^1([-1,1])$ vanishes.

Recall that when $\k$ is constantly zero, the rotational flat surfaces are cones, cylinders and horizontal planes, all of them parametrized by their angle function. Indeed, cones have constant angle varying between $0$ and $\pm1$, limit cases corresponding to cylinders and horizontal planes, respectively. If $\nu\neq0$ we denote by $C_{\nu_0}$ to the cone of constant angle $\nu_0$.

As remarked in Lem. \ref{lemapropiedades} and Prop. \ref{comportamientoorbita}, both the local and global behavior of $\k$ fully determine the structure of the phase plane. Nonetheless, although a general classification result for rotational $\k$-surfaces for an arbitrary $\k$ seems hopeless, a common example arises when assuming that $\k$ vanishes at some point, as we prove next.

\begin{pro}\label{existenciagrafoscompletosconos}
Let be $\k\in C^1([-1,1])$ such that $\k(\nu_0)=0$ for some $\nu_0\in(-1,1)$. If $\nu_0\geq0$ and $\k(1)>0$ (resp. $\nu_0\leq0$ and $\k(-1)>0$), there exists a rotational, strictly convex, upwards (resp. downwards) oriented, entire $\k$-graph $\sig_{\nu_0}$. Moreover, if $\nu_0\neq0$, after a vertical translation $\sig_{\nu_0}$ is asymptotic to $C_{\nu_0}$.
\end{pro}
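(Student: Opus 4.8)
The plan is to run a trapping argument in the phase plane $\Theta$ for the orbit that leaves the boundary point $(0,0)$, and then to read the geometry off that orbit. Set $\theta_0=\arccos(\nu_0)\in(0,\pi/2]$, so that the line $\{\theta=\theta_0\}$ is exactly the orbit of the cone $C_{\nu_0}$ furnished by item 1 of Lemma \ref{lemapropiedades}. Since $\k(1)>0$ and $\k(\nu_0)=0$, I would first assume $\k>0$ on $(\nu_0,1)$; otherwise one replaces $\nu_0$ by the largest zero of $\k$ in $[0,1)$, which still satisfies $\nu_0\geq0$ and the same hypotheses. By Corollary \ref{existenciaorbitamasmenos} (equivalently Theorem \ref{existenciaradial}) there is an orbit $\gamma_+=(x(s),\theta(s))$ with endpoint $(0,0)$, whose associated radial graph $u$ satisfies $u(0)=u'(0)=0$ and, by the L'Hôpital computation in Theorem \ref{existenciaradial}, $u''(0)=\sqrt{\k(1)}>0$. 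The downwards case then follows from the upwards case applied to $\tilde\k(t)=\k(-t)$ via a horizontal reflection, under which a $\k$-surface becomes a $\tilde\k$-surface and the roles of $\pm1$ and of $\pm\nu_0$ are interchanged, so it suffices to treat the upwards case.

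Next I would confine $\gamma_+$ to the strip $0<\theta<\theta_0$. Near $(0,0)$ one has $x>0$, $\theta>0$ small, $x'=\cos\theta>0$ and $\theta'=x\,\k(\cos\theta)/\sin\theta>0$, so the orbit enters the strip with both coordinates increasing. It cannot cross the cone orbit $\{\theta=\theta_0\}$ nor the boundary orbit $\{\theta=0\}$, by uniqueness of the Cauchy problem (item 4 of Lemma \ref{lemapropiedades}) and item 3 of the same lemma; hence $0<\theta(s)<\theta_0$ throughout. In the strip $\cos\theta\in(\nu_0,1)$, so $\k(\cos\theta)>0$ and item 1 of Proposition \ref{comportamientoorbita} gives that $\theta(x)$ is strictly increasing while $x$ increases. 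Strict convexity and the upwards orientation are then immediate from \eqref{princcurv}: $\kappa_1=\theta'=x\,\k(\cos\theta)/\sin\theta>0$ and $\kappa_2=\sin\theta/x>0$, while the angle function $\nu=\cos\theta>0$.

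The crux is to show the graph is \emph{entire}, i.e. that $x\to\infty$ along $\gamma_+$, and that $\theta\to\theta_0$. Separating variables in \eqref{1ordersys} via $\tfrac{d}{d\theta}(x^2/2)=x\,\tfrac{dx}{d\theta}=\tfrac{\cos\theta\sin\theta}{\k(\cos\theta)}$, I would integrate to
\[
\frac{x(\theta)^2}{2}=\int_0^{\theta}\frac{\cos t\,\sin t}{\k(\cos t)}\,dt .
\]
Since $\theta$ is increasing and bounded by $\theta_0$, it has a limit $\theta_\infty\le\theta_0$. If $\theta_\infty<\theta_0$ the integrand stays bounded on $[0,\theta_\infty]$, so $x$ would converge to a finite value and $\gamma_+$ to an interior non-equilibrium point, contradicting the foliation property (item 4 of Lemma \ref{lemapropiedades}); hence $\theta_\infty=\theta_0$. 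Because $\nu_0=\cos\theta_0\neq0$, the numerator tends to $\nu_0\sin\theta_0\neq0$ while $\k(\cos t)\to0$ at most linearly (as $\k\in C^1$), so the integral diverges and $x\to\infty$: the graph is entire, consistently with Proposition \ref{orbitaescapa}, whose conclusion $\k(\cos\theta_\infty)=0$ matches $\theta_\infty=\theta_0$. The borderline $\nu_0=0$, where the numerator also vanishes and entireness hinges on the order of the zero of $\k$, is the delicate case deferred to Section \ref{sec:asympt}.

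Finally, for the asymptotic statement when $\nu_0\neq0$ I would quantify $\theta(x)\to\theta_0$. Near $\theta_0$ the integral behaves like $\int^{\theta}\tfrac{dt}{\theta_0-t}$, giving $x^2\sim -B\log(\theta_0-\theta)$, so $\theta_0-\theta(x)$ decays fast (at least like $e^{-x^2/B}$ when $\k'(\nu_0)\neq0$). Consequently $u'(x)-\tan\theta_0=\tan\theta(x)-\tan\theta_0$ is integrable on $[1,\infty)$, hence $u(x)-x\tan\theta_0$ converges to a finite limit $L$; after the vertical translation by $-L$, the profile curve is asymptotic to the line of slope $\tan\theta_0$, i.e. $\sig_{\nu_0}$ is asymptotic to $C_{\nu_0}$. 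The main obstacle is precisely this quantitative control of the orbit as $\theta\uparrow\theta_0$: the trapping and the convexity drop out directly from the phase-plane results, but proving that $x$ genuinely escapes to infinity and that $\theta_0-\theta$ decays fast enough to force asymptoticity to the cone requires the sharp analysis of the singular integral above.
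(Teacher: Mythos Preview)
Your argument is correct and follows the same phase-plane strategy as the paper: pass to the largest zero $\nu_0$ of $\k$ in $[0,1)$, launch the orbit $\gamma_+$ from $(0,0)$ via Corollary~\ref{existenciaorbitamasmenos}, trap it in the strip $0<\theta<\theta_0$ by monotonicity and uniqueness, and read off strict convexity from \eqref{princcurv}. The paper rules out convergence of $\gamma_+$ to a finite point $(x_0,\theta_0)$ by the mean value theorem applied to $x'(s)=\cos\theta(s)$ (if $x(s)\to x_0$ then $x'(s)\to 0$, impossible since $\cos\theta_0\neq 0$), and then invokes Proposition~\ref{orbitaescapa} to get $\theta(x)\to\theta_0$ as $x\to\infty$.

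Your route differs in one genuinely useful respect: instead of the mean value theorem plus Proposition~\ref{orbitaescapa}, you produce the explicit first integral
\[
\frac{x(\theta)^2}{2}=\int_0^{\theta}\frac{\cos t\,\sin t}{\k(\cos t)}\,dt,
\]
from which both $x\to\infty$ and $\theta\to\theta_0$ follow directly, and which moreover yields the quantitative decay $\theta_0-\theta(x)\le Ce^{-c x^2}$ (using only the $C^1$ bound $\k(\cos t)\le C(\theta_0-t)$, so your parenthetical ``when $\k'(\nu_0)\neq 0$'' is in fact unnecessary). This extra information lets you prove that $u(x)-x\tan\theta_0$ actually converges, i.e.\ that $\Sigma_{\nu_0}$ is asymptotic to $C_{\nu_0}$ in the strong sense of bounded vertical distance after translation; the paper only asserts this convergence without a quantitative justification. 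So your approach buys a cleaner and more complete treatment of the asymptotic claim, at essentially no extra cost.
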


\begin{proof}
We prove the case that $\nu_0\geq0$ and $\k(1)>0$, since the case $\nu_0\leq0,\ \k(-1)>0$, is similar. Let $\widehat{\nu_0}$ be the largest value in $[0,1]$ where $\k$ vanishes and define $\t_0=\arccos\widehat{\nu_0}$. For saving notation, we will keep naming $\widehat{\nu_0}$ as $\nu_0$.

By the definition of $\nu_0$ and since $\k(1)>0$, $\k_{|[\nu_0,1]}$ is non-negative and only vanishes at $\nu_0$. As a matter of fact, $\Theta_+=\{(x,\t)\in\Theta;\ \t<\t_0\}$ is a monotonicity region and due to Prop. \ref{comportamientoorbita}, an orbit $\gamma(s)=(x(s),\t(s))$ in $\Theta_+$ satisfies $x'(s)>0$ and $\t'(s)>0$.

Let $\gamma_+(s)$ be the orbit in $\Theta$ with $\gamma_+(0)=(0,0)$ as endpoint, given by Cor. \ref{existenciaorbitamasmenos}. This orbit generates a planar, arc-length parametrized curve $\alpha_+(s)$ in $\r3$ intersecting the $z$-axis orthogonally with upwards unit normal. Thus, $\gamma_+(s)$ lies in $\Theta_+$ for $s>0$ small enough; see Fig. \ref{fig:clasicono}, left.

\begin{figure}[h]
\centering
\includegraphics[width=.85\textwidth]{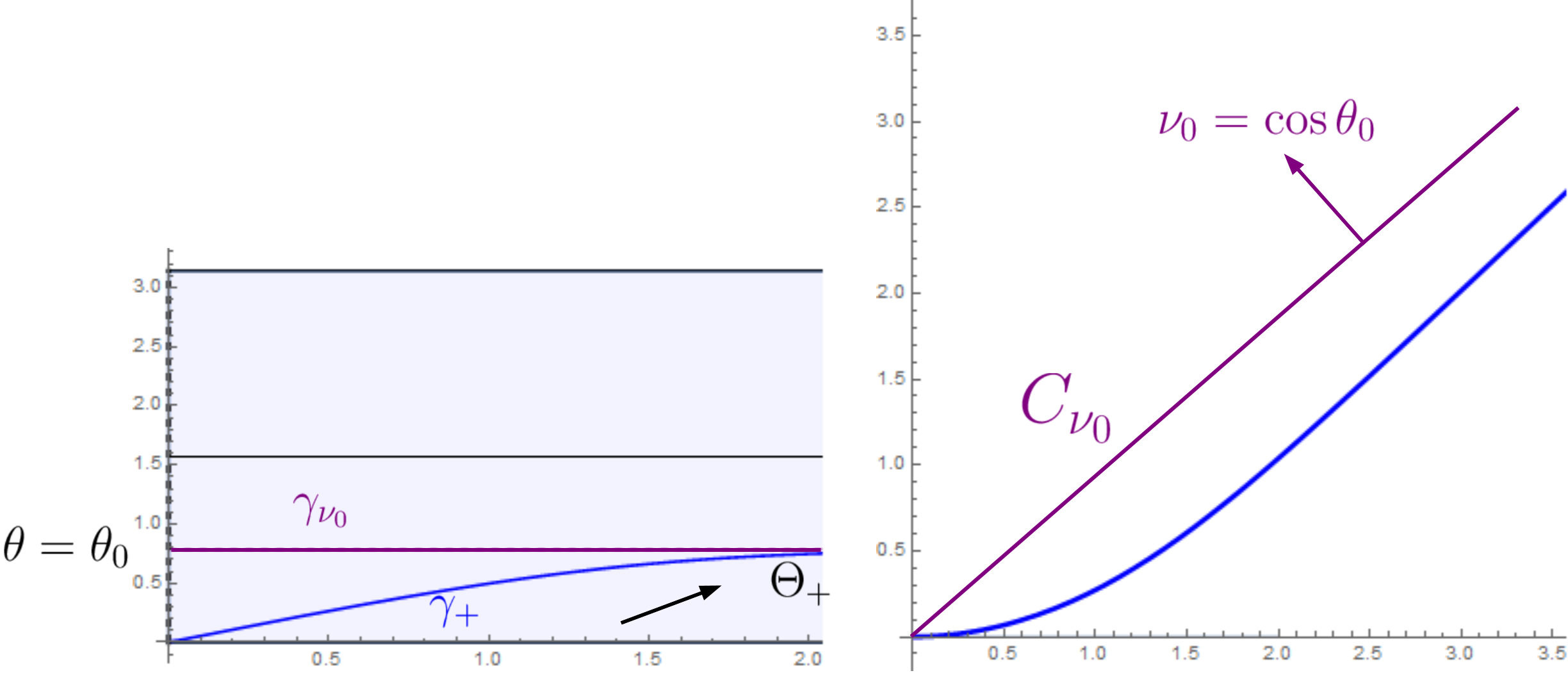}
\caption{Left: The phase plane for a function $\k$ such that $\k(\t_0)=0,\ \t_0<\pi/2$ and $\k(1)>0$, and the orbit $\gamma_+$. Right, the profile curve of the entire graph associated to $\gamma_+$.}
\label{fig:clasicono}
\end{figure}

At this point, we distinguish cases on $\nu_0$. First, assume that $\nu_0\neq0$ (i.e. $\t_0\neq\pi/2$) and define $\gamma_{\nu_0}(s)=(s,\t_0),\ s\geq0,$ the orbit corresponding to the cone $C_{\nu_0}$. We prove that $\gamma_+$ cannot converge to some $(x_0,\t_0)\in \gamma_{\nu_0}$. Arguing by contradiction, suppose that $\gamma_+(s)\rightarrow(x_0,\t_0)\in \gamma_{\nu_0}$, which necessarily holds as $s\rightarrow\infty$ by the uniqueness of the Cauchy problem. Since $x(s)\rightarrow x_0$ as $s\rightarrow\infty$, by the mean value theorem we conclude that $x'(s)\rightarrow0$ as $s\rightarrow\infty$. However, this is a contradiction since $x'(s)=\cos\t(s)$ and $\t(s)\rightarrow\t_0\neq\pi/2$. Therefore, $\gamma_+$ can be expressed a graph $\t(x)$ in $\Theta_+$ satisfying $\t(0)=0,\ \t'(x)>0$ and $\t(x)<\t_0$ for every $x>0$. In particular, by Prop. \ref{orbitaescapa} we have $\lim_{x\rightarrow\infty}\t(x)=\t_0$, that is $\t(x)$ has $\gamma_{\nu_0}$ as horizontal asymptote and converges to it.

If $\nu_0=0$, the orbit $\gamma_+$ satisfies $\t(s)\rightarrow\pi/2$ as $s\rightarrow\infty$ and $x(s)$ can either behave as: $x(s)\rightarrow x_0>0$ or $x(s)\rightarrow\infty$. In Section \ref{sec:asympt} we study in detail these two possibilities.

In any case, the curve $\alpha_+(s)$ in $\r3$ defined by $\gamma_+(s)$ is complete and intersects the $z$-axis orthogonally. Moreover, if $\nu_0\neq0$, up to a vertical translation, $\alpha_+$ converges to $C_{\nu_0}$ (see Fig. \ref{fig:clasicono}, right); otherwise, it either converges to a flat cylinder of a certain radius or is an entire graph. The surface obtained by rotating $\alpha_+$ around the $z$-axis is a $\k$-surface with the properties announced in the statement of Prop. \ref{existenciagrafoscompletosconos}.

This completes the proof for the case that $\nu_0\geq0$. The case $\nu_0<0$ is treated similarly.
\end{proof}

In the case that $\k(1)=0$ (resp. $\k(-1)=0$), horizontal planes with upwards (resp. downwards) orientation are $\k$-surfaces. Depending on the properties of $\k$, one can find different behaviors in the class of $\k$-surfaces. For instance, the next cases are easy to prove using the properties of the phase plane, although details will be skipped here.
\begin{ex}\label{example1}
Let be $\k\in C^1([-1,1])$ such that $\k(\pm1)=0$.
\begin{itemize}
\item If $\k(y)>0,\ \forall y\in(-1,1)$, then for every $x_0>0$ there exists an orbit passing through $(x_0,\pi/2)$ and having two points at $x=0$ as endpoints. Consequently, the $\k$-surfaces are a 1-parameter family of compact, non-complete surfaces intersecting the axis of rotation at singular cusp points.
\item If $\k(y)<0,\ \forall y\in(-1,1)$, then for every $x_0>0$ there exists an orbit passing through $(x_0,\pi/2)$ and whose ends are asymptotic to the lines $x=0$ and $x=\pi$ . Consequently, the $\k$-surfaces associated to these orbits are a 1-parameter family of properly embedded annuli that resemble to the usual minimal catenoids of negative Gauss curvature.
\end{itemize}
\end{ex}

\subsection{The case $\k<0$.}\label{subsec:kneg} This section is devoted to classify rotational $\k$-surfaces for the case that $\k<0$. Note that since $\k$ is $C^1$, there exists $c_0<0$ such that $\k\leq c_0<0$, and in particular none of the following $\k$-surfaces is complete, in virtue of Efimov's theorem: no $C^2$-surface can be immersed in $\R^3$ if its Gauss curvature is bounded from above by a negative constant \cite{Efi}.

The classification result proved next is a generalization of the well-known case of constant Gauss curvature $K<0$.

\begin{teo}\label{clasificacionKnegativa}
Let be $\k\in C^1([-1,1])$ such that $\k<0$. The rotational $\k$-surfaces are classified as follows:
\begin{itemize}
\item[1.] For each $x_0>0$, a $\k$-surface diffeomorphic to $\S^1\times(0,1)$ having as boundary circumferences of certain radii, and whose \emph{waist} (the smallest circumference) has radius $x_0$.
\item[2.] For each $\nu_0\in(-1,1),\ \nu_0\neq0$, a simply connected $\k$-surface that intersects the axis of rotation with angle function equal to $\nu_0$, and has as boundary a circumference of a certain radius.
\item[3.] Two $\k$-surfaces diffeomorphic to $\S^1\times(0,1)$. One end of is unbounded and converges to the $z$-axis, and the other has as boundary a circumference of a certain radius.
\end{itemize}
Moreover, the boundary circumferences consist on singular points that do not belong to any $\k$-surface.
\end{teo}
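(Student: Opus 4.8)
The plan is to read the three families directly off the global phase portrait of system \eqref{1ordersys} under the sign condition $\k<0$, and then translate each orbit into its surface of revolution. The qualitative skeleton comes first. Since $\k(\cos\t)<0$ for every $\t\in(0,\pi)$ and $\sin\t>0$, along any orbit with $x>0$ we have $\t'(s)=x\,\k(\cos\t)/\sin\t<0$, so $\t$ is strictly decreasing; and since $\k(0)\neq0$ we have $E_0=\varnothing$, so by Lem. \ref{lemapropiedades} the orbits foliate $\Theta$. The one exception is the corner $(0,\pi/2)$, which is itself an equilibrium of \eqref{1ordersys} (there $x'=\cos(\pi/2)=0$ and $\t'=0$ because $x=0$); this point will be decisive. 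Moreover, because $\k$ never vanishes, Prop. \ref{orbitaescapa} forbids $x(s)\to\infty$, so every orbit has $x$ bounded. Together with the monotonicity of $\t$ and with Prop. \ref{comportamientoorbita} (giving $x$ strictly decreasing for $\t>\pi/2$ and strictly increasing for $\t<\pi/2$), this shows that the only possible limits of an orbit are the lines $\t=0,\pi$, a point $(0,\t_0)$ on the axis, or the corner $(0,\pi/2)$.

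Next I would analyse the singular boundary circles. Whenever an orbit has $\t(s)\to0$ (resp. $\t(s)\to\pi$), I would show it reaches that line at a \emph{finite} parameter and at a \emph{strictly positive} radius: writing $\t'\sim x\,\k(1)/\t$ near $\t=0$ and separating variables gives an integrable, hence finite-time, blow-up, while $x$ is monotone and bounded, hence converges to some $x_1\geq x_0>0$. Since $\kappa_\alpha(s)=\t'(s)$ and $\t'\to-\infty$ as $\t\to0,\pi$ (because $\k(1),\k(-1)<0$), the profile curve has unbounded curvature at these endpoints, so the corresponding parallel is a genuine singular point where the surface fails to be $C^2$; this is exactly the final assertion that the boundary circumferences do not belong to any $\k$-surface. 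Note also that by Cor. \ref{existenciaorbitamasmenos}, since $\k(\pm1)<0$, no orbit ends at $(0,0)$ or $(0,\pi)$, so no $\k$-surface meets the axis orthogonally, consistent with the incompleteness forced by Efimov's theorem.

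The heart of the argument, and the step I expect to be the main obstacle, is the local analysis at $(0,\pi/2)$, because this equilibrium lies on $\partial\Theta$. Linearizing \eqref{1ordersys} there gives the Jacobian $\left(\begin{array}{cc}0&-1\\ \k(0)&0\end{array}\right)$, with eigenvalues $\pm\sqrt{-\k(0)}$; since $\k(0)<0$ these are real and opposite, so the corner is a hyperbolic saddle. The eigendirections $(1,\mp\sqrt{-\k(0)})$ both point into $x>0$, so exactly one unstable separatrix $W^u$ enters $\Theta$ in $\t<\pi/2$ and exactly one stable separatrix $W^s$ enters in $\t>\pi/2$; these are the only orbits converging to the corner, as $s\to-\infty$ and $s\to+\infty$ respectively. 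The hard part is justifying this stable/unstable manifold picture on the half-space $x\geq0$ and ruling out any further orbit limiting on the corner. Granting it, every remaining orbit is classified by its position relative to $\t=\pi/2$: if $\gamma$ meets $\t=\pi/2$ at some $x_0>0$, then by Prop. \ref{comportamientoorbita} $x$ strictly increases on both sides away from the crossing, so $\gamma$ never reaches the axis and, by Step 2, issues from $(x_2,\pi)$ and terminates at $(x_1,0)$ with unique minimal radius $x_0$ at $\t=\pi/2$ (family 1, parametrized by the waist $x_0>0$). If instead $\gamma$ lies entirely in $\t>\pi/2$ (the side $\t<\pi/2$ is symmetric), its decreasing angle tends to some $\t_0\in[\pi/2,\pi)$, and since $\t'\to0$ is incompatible with $x>0$ one must have $x\to0$; thus either $\t_0\in(\pi/2,\pi)$ and $\gamma$ reaches the regular axis point $(0,\t_0)$ in finite time (because $x'=\cos\t_0\neq0$), giving family 2 parametrized by $\nu_0=\cos\t_0\in(-1,1)\setminus\{0\}$, or $\t_0=\pi/2$ and $\gamma=W^s$ converges to the corner, giving family 3. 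This exhausts all orbits, and the uniqueness of the separatrices yields exactly two surfaces in family 3.

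Finally I would translate orbits into surfaces and record the topology. An orbit of family 1 lives on a finite open $s$-interval between the two singular ends $\t=0,\pi$; rotating its profile curve gives a surface diffeomorphic to $\S^1\times(0,1)$ whose smallest parallel, at $\t=\pi/2$, has radius $x_0$. An orbit of family 2 runs from a single axis point (a cusp with angle function $\nu_0=\cos\t_0$) to one singular circle, so its surface is a topological disk, hence simply connected, with one boundary circumference. Each separatrix of family 3 has one end asymptotic to the corner, where $x\to0$ while $s\to\pm\infty$ and $z'=\sin\t\to1$ forces $|z|\to\infty$: that end is unbounded and converges to the $z$-axis, while the other is a singular circle, and the surface is again diffeomorphic to $\S^1\times(0,1)$. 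In every case the limiting parallels at $\t=0,\pi$ carry unbounded curvature and are excluded, which completes the classification.
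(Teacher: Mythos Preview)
Your argument is correct and the overall architecture---monotonicity of $\theta$, boundedness of $x$ via Prop.~\ref{orbitaescapa}, trichotomy according to whether an orbit crosses $\theta=\pi/2$, reaches the axis at $(0,\theta_0)$, or limits on the corner $(0,\pi/2)$---matches the paper. The genuine difference is in how Item~3 is obtained. The paper does \emph{not} linearize: it takes a sequence of Item-1 orbits $\gamma_{x_n}$ with waist $x_n\to 0$, shows (using an Item-2 orbit as a barrier) that their endpoints $x_n^1$ on $\theta=0$ stay bounded away from $0$, and passes to the limit to produce an orbit $\gamma_\infty$ joining $(0,\pi/2)$ to $(x_\infty^1,0)$; a separate claim (your equilibrium observation) then forces $s\to-\infty$ along that orbit. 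Your route via the Jacobian and the hyperbolic-saddle picture is more conceptual and immediately explains both the existence and the \emph{uniqueness} of the two separatrices, at the cost of invoking the stable/unstable manifold theorem at a boundary equilibrium---which is harmless here since $F(x,\theta)$ in \eqref{1ordersys} extends $C^1$ across $x=0$, so the theorem applies to the extended system and one restricts back. The paper's limiting argument is more elementary and self-contained but gives uniqueness only implicitly (through the foliation).

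Two small remarks. First, your appeal to Cor.~\ref{existenciaorbitamasmenos} to \emph{exclude} orbits ending at $(0,0)$ or $(0,\pi)$ is not quite what that corollary says (it is an existence statement under $\k(\pm1)\geq0$); the correct justification is the computation $u''(0)^2=\k(1)$ in the proof of Th.~\ref{existenciaradial}, which has no real solution when $\k(1)<0$. Second, your finite-time/curvature-blow-up analysis at $\theta\to0,\pi$ and the explicit reason why the boundary parallels are singular are more detailed than what the paper provides, where these points are asserted rather than argued.
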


\begin{proof}
Let be $\k\in C^1([-1,1])$ such that $\k<0$. The structure of the phase plane is the following: there are two monotonicity regions, namely $\Theta_1=\Theta\cap\{\t<\pi/2\}$ and $\Theta_2=\Theta\cap\{\t>\pi/2\}$, with monotonicity directions given by Prop. \ref{comportamientoorbita}.

First, we prove Item $\mathit{1}$. Let be $x_0>0$ and $\gamma_{x_0}(s)$ the orbit in $\Theta$ such that $\gamma_{x_0}(0)=(x_0,\pi/2)$. For $s>0$ (resp. $s<0$), $\gamma_{x_0}(s)$ lies in $\Theta_1$ (resp. in $\Theta_2$); we focus in the case $s>0$ since the opposite is similar.

By monotonicity, $\gamma_{x_0}(s)$ is written as a vertical graph $g_{x_0}(x),\ x\geq x_0$ such that $g_{x_0}(x_0)=\pi/2$ and $g_{x_0}'(x)<0$. Note that $g_{x_0}$ cannot satisfy $g_{x_0}(x)\rightarrow\t_0$ as $x\rightarrow\infty$ in virtue of Prop. \ref{orbitaescapa}, and cannot converge to a finite point $(x_0,\t_0)\in\Theta_1$. So, the only possibility is that $\gamma_{x_0}(s)\rightarrow(x_0^1,0)$ as $s\rightarrow s_0$ for some $x_0^1>x_0$. For $s<0$ the discussion is similar, i.e. $\gamma_{x_0}(s)$ converges to some $(x_0^2,\pi)$ with $x_0^2>x_0$.

Hence, for each $x_0>0$, $\gamma_{x_0}(s)$ is a bi-graph over $\{(x,\pi/2),\ x\geq 0\}$ that has $(x_0^1,0)$ and $(x_0^2,\pi)$ as endpoints. The corresponding $\k$-surface is diffeomorphic to $\S^1\times(0,1)$, is a bi-graph over some horizontal plane and has as boundary two circles of singular points of radii $x_0^1$ and $x_0^2$ ; see Fig. \ref{fig:clasineg}, the orbit and profile curve in orange.

\begin{figure}[h]
\centering
\includegraphics[width=.7\textwidth]{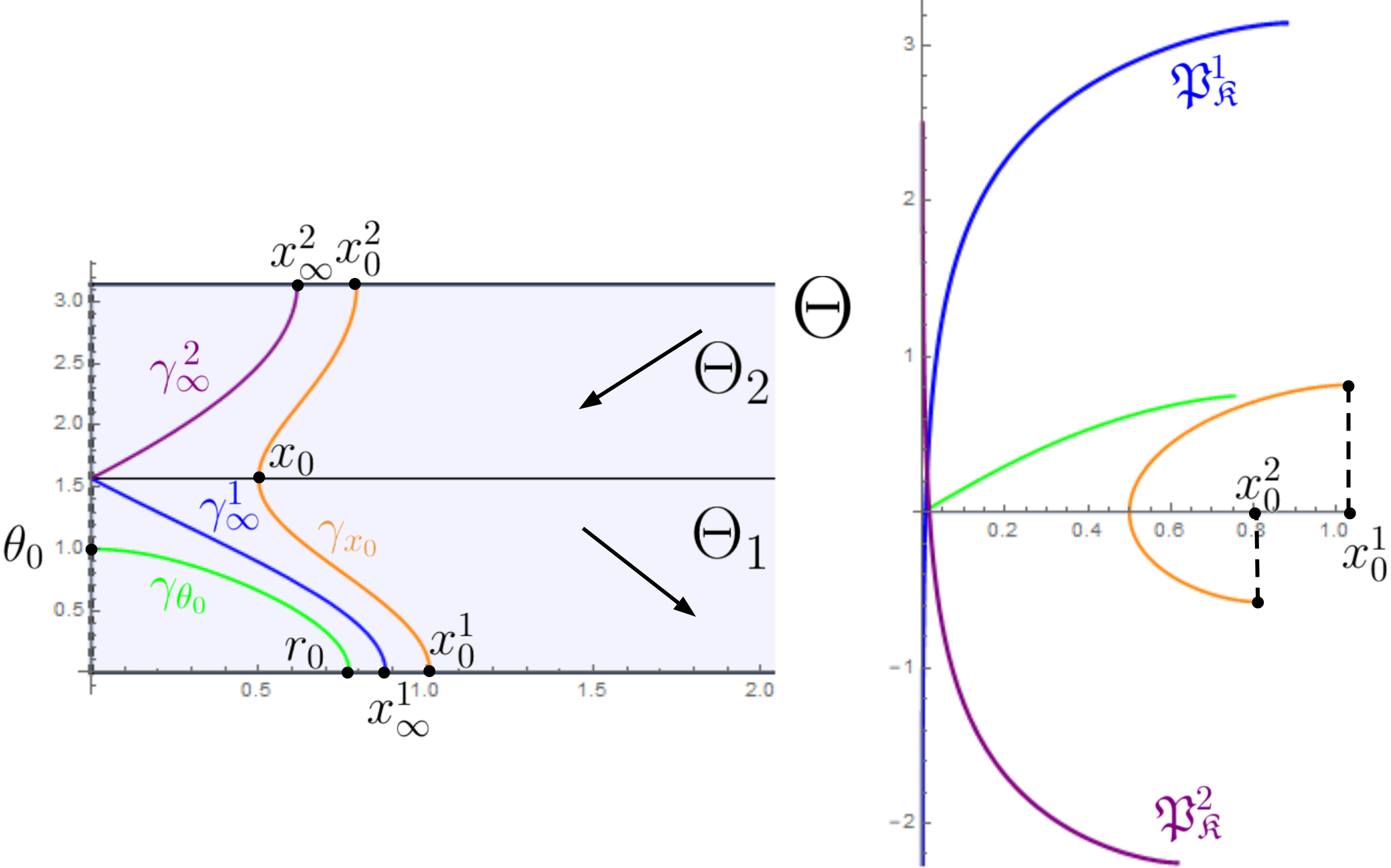}
\caption{Left: the phase plane and the different orbits for a function $\k<0$. Right: the profile curve associated to each orbit.}
\label{fig:clasineg}
\end{figure}

For the proof of Item $\mathit{2}$, let be $\nu_0\in(0,1)$, define $\t_0=\arccos\nu_0$ and consider $\gamma_{\t_0}(s)$ the orbit in $\Theta$ having at $s=0$ the point $(0,\t_0)$ as endpoint; the case $\nu_0\in(-1,0)$ is similar. By monotonicity, $\gamma_{\t_0}(s)$ lies in $\Theta_1$ for $s>0$ and stays there while it converges to some finite point $(r_0,0)$. The corresponding $\k$-surface is a simply connected graph having as boundary a circle of singular points of radius $r_0$; see Fig. \ref{fig:clasineg}, the orbit in green.

Lastly, we exhibit the existence of two $\k$-surfaces with the properties announced in Item $\mathit{3}$. The following claims will be needed in order to achieve this proof:

\begin{claim}
If $\gamma(s)=(x(s),\t(s))$ is an orbit in $\Theta$ converging to the point $(0,\pi/2)$, the parameter $s$ must tend to $\pm\infty$
\end{claim}
\begin{proofclaim}
Recall that the orbit $\gamma$ is a solution of
\begin{equation}\label{sistode}
\left(\begin{array}{c}
x(s)\\
\theta(s)
\end{array}\right)'
=
\left(\begin{array}{c}
\cos\theta(s)\\
\displaystyle{\frac{x(s)\k(\cos\theta(s))}{\sin\theta(s)}}
\end{array}\right):=F(x,\t).
\end{equation}
The function $F(x,\t)$ is $C^1$ at $(0,\pi/2)$, hence there exists a unique solution of \eqref{sistode} with this initial data. Indeed, the explicit solution is given by $x(s)=0,\ \t(s)=\pi/2$. We emphasize that this solution does not generate a $\k$-surface since the corresponding arc-length parametrized curve is $x(s)=0,\ z(s)=s$, which agrees with the axis of rotation. By uniqueness of the Cauchy problem of \eqref{sistode} for the initial condition $(0,\pi/2)$, an orbit in $\Theta$ cannot have as finite endpoint such initial condition, hence if $\gamma(s)$ converges to $(0,\pi/2)$, it does with $s\rightarrow\pm\infty$. This proves \textbf{Claim 1}.
\end{proofclaim}

Consider a strictly decreasing sequence $x_n>0,\ x_n\rightarrow0$, and let $\gamma_{x_n}(s)=(x_n(s),\t_n(s))$ be the orbit in $\Theta$ such that $\gamma_{x_n}(0)=(x_n,0)$. As defined in the proof of Item $\textit{1}$ for $s>0$, we define $x_n^1>x_n$ the value for which $(x_n^1,0)$ is the endpoint of $\gamma_{x_n}$.

\begin{claim}
\emph{The sequence $x_n^1$ as defined above satisfies $x_n^1\rightarrow x_\infty>0$}. 
\end{claim}
\begin{proofclaim}
By their definition, and since two distinct orbits in $\Theta$ cannot intersect, it is clear that if $x_n<x_m$ then $x_n^1\leq x_m^1$. So, $x_n^1$ is a decreasing sequence of positive numbers. We claim that this sequence is bounded from below above zero. Indeed, take some $\t_0\in(0,\pi/2)$ and consider the orbit $\gamma_{\t_0}(s)$ having $(0,\t_0)$ as endpoint. The discussion made in the proof of Item $\textit{2}$ ensures us that $\gamma_{\t_0}$ converges to some $(r_0,0),\ r_0>0$. Since $\gamma_{\t_0}$ and each $\gamma_{x_n}$ are disjoint orbits, it is clear that $r_0\leq x_n^1,\ \forall n$. In conclusion, $x_n^1\rightarrow x_\infty^1\geq r_0$ as $n\rightarrow\infty$ and in particular $x_\infty^1>0$. This proves \textbf{Claim 2}.
\end{proofclaim}

So, as $n\rightarrow\infty$ the orbits $\gamma_{x_n}$ converge to some orbit $\gamma_\infty^1$ having the points $(0,\pi/2)$ and $(x_\infty^1,0)$ as limit endpoints. Moreover, $\gamma_\infty^1$ converges to $(0,\pi/2)$ as $s\rightarrow -\infty$; see Fig. \ref{fig:clasineg}, the orbit in blue.

Finally, let $\alpha_\infty^1(s)$ be the arc-length parametrized curve associated to $\gamma_\infty^1(s)$ and denote by $\mathfrak{P}_\k^1$ to the $\k$-surface generated by $\alpha_\infty^1$. When $s\rightarrow-\infty$, $x(s)\rightarrow0$ and $z'(s)\rightarrow1$, and the length of $\alpha_\infty^1(s)$ tends to infinity, that is $\alpha_\infty^1(s)$ converges to the $z$-axis. So, $\mathfrak{P}_\k^1$ has the topology of $\S^1\times(0,1)$, with one end converging to the $z$-axis and the other having as boundary a circle of singular points of radius $x_\infty^1$.

The $\k$-surface $\mathfrak{P}_\k^2$ is defined in the same fashion, but taking the orbits $\gamma_{x_n}(s)$ such that $\gamma_{x_n}(0)=(x_n,0)$ and making $s<0$. Hence, each $\gamma_{x_n}(s)$ lies in $\Theta_2$ and this time the limit orbit $\gamma_\infty^2$ converges to $(0,\pi/2)$ as $s\rightarrow\infty$.
\end{proof}
The $\k$-surfaces $\mathfrak{P}_\k^i,\ i=1,2,$ are the analogous to the pseudosphere of constant curvature $K=-1$.

\begin{obs}
Assume that $\k(y)<0,\ \forall y\in(-1,1)$ and $\k(\pm1)=0$. Then, as concluded in Ex. \ref{example1}, the orbits of type $\gamma_{x_0}$ and $\gamma_\infty^i,\ i=1,2,$ are asymptotic to the lines $x=0$ and $x=\pi$. In particular, all the corresponding $\k$-surfaces are complete, and none of them are a contradiction with Efimov's theorem since their Gauss curvature tend to zero. The orbits of type $\gamma_{\t_0}$ are also asymptotic to the lines $x=0$ and $x=\pi$, generating entire graphs having a cusp point at the axis of rotation.
\end{obs}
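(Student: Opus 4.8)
The plan is to isolate the one feature that separates this degenerate situation from Theorem \ref{clasificacionKnegativa}: because $\k(\pm1)=0$, every end of an orbit that would previously terminate on the boundary $\t=0$ or $\t=\pi$ at a finite radius now only approaches it asymptotically, as $x\to\infty$. I would first record that along any orbit the Gauss curvature of the associated $\k$-surface is $K_\sig=\k(\cos\t)$, the angle function being $\cos\t$. The whole Observation then rests on the following claim: in a monotonicity region, if $\t(s)$ is monotone and tends to $0$ (resp. $\pi$), then necessarily $x(s)\to\infty$, $s\to\pm\infty$, and $\t$ reaches $0$ (resp. $\pi$) only in the limit.

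Proving this claim is the crux, and it is the step I expect to carry the whole Observation. By monotonicity the limits $x(s)\to\bar x\in(0,\infty]$ and $\t(s)\to\t_\infty$ exist, and I would rule out $\bar x<\infty$ as follows. The orbit cannot approach a finite interior point, since by Lemma \ref{lemapropiedades}(4) it would extend past it; it cannot approach a finite boundary point $(\bar x,0)$ or $(\bar x,\pi)$ with $\bar x>0$ either, because $\k(\pm1)=0$ makes $\t=0$ and $\t=\pi$ orbits that, by Lemma \ref{lemapropiedades}(3), no interior orbit may meet; and if $\bar x<\infty$ with $s\to\pm\infty$, then $x'=\cos\t\to0$ forces $\t_\infty=\pi/2$, against the monotone approach to $0$ or $\pi$. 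Hence $\bar x=\infty$, whereupon Proposition \ref{orbitaescapa} gives $\k(\cos\t_\infty)=0$ and thus $\t_\infty\in\{0,\pi\}$ (as $\k<0$ on $(-1,1)$); finally $|x'|\le1$ together with $x\to\infty$ forces $s\to\pm\infty$. The only delicate point is this clean dichotomy between a finite-radius endpoint and an asymptotic end, which is precisely where the hypothesis $\k(\pm1)=0$ enters.

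With the claim in hand I would pass through the three families of orbits furnished by Example \ref{example1} and Theorem \ref{clasificacionKnegativa}. For the waist orbit $\gamma_{x_0}$ through $(x_0,\pi/2)$, its branch in $\Theta\cap\{\t<\pi/2\}$ has $\t$ decreasing and is therefore asymptotic to $\t=0$ with $x\to\infty$ as $s\to+\infty$; symmetrically its other branch is asymptotic to $\t=\pi$ as $s\to-\infty$. Being defined for all $s\in\R$ with $x\ge x_0>0$ and $\t\in(0,\pi)$, the profile curve never meets the axis nor develops a cusp, so the surface is a complete annulus. The separatrices $\gamma_\infty^i$ arise, as in the proof of Theorem \ref{clasificacionKnegativa}, as limits of the waist orbits $\gamma_{x_n}$ as the waist radius $x_n\to0$; each has one end asymptotic to $(0,\pi/2)$ (where $x\to0$, $\t\to\pi/2$, so the profile curve converges to the $z$-axis) and, by the claim, the other asymptotic to $\t=0$ (resp. $\pi$) with $x\to\infty$, giving again a complete annulus with one end converging to the axis. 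The cusp orbit $\gamma_{\t_0}$ from $(0,\t_0)$, $\t_0\ne\pi/2$, is by the claim asymptotic to $\t=0$ (resp. $\pi$) with $x\to\infty$, so $x$ sweeps all of $[0,\infty)$ and the surface is an entire graph, failing to be complete only because of the cusp at the axis, as in Section \ref{subsec:radialsols}. Finally, on every end where $\t\to0$ or $\t\to\pi$ one has $K_\sig=\k(\cos\t)\to\k(\pm1)=0$, so on the complete examples $\gamma_{x_0}$ and $\gamma_\infty^i$ the Gauss curvature is negative with supremum $0$; it is not bounded above by a negative constant, and Efimov's theorem \cite{Efi} is not contradicted.
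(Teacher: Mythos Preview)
The paper does not supply a proof for this Observation: it merely cross-references Example~\ref{example1}, which itself announces that ``details will be skipped here''. Your proposal fills in what the paper left implicit, and it does so in exactly the spirit intended, by combining Lemma~\ref{lemapropiedades}(3)--(4) with Proposition~\ref{orbitaescapa} to force every end of an orbit in a monotonicity region either to escape with $x\to\infty$ and $\t\to 0$ or $\pi$, or (for $\gamma_\infty^i$) to approach $(0,\pi/2)$. The trichotomy in your central claim is correct, and the case $\bar x<\infty$ with $s\to\pm\infty$ is cleanly settled by the observation that then $x'=\cos\t$ would have to tend to zero.

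One reference deserves a word of adaptation. You invoke the construction of $\gamma_\infty^i$ ``as in the proof of Theorem~\ref{clasificacionKnegativa}'', but that proof builds $\gamma_\infty^1$ from the finite endpoints $(x_n^1,0)$ of the waist orbits, and Claim~2 there bounds $x_n^1$ from below using the finite endpoint $(r_0,0)$ of some $\gamma_{\t_0}$. Under the present hypothesis $\k(\pm1)=0$ none of these finite boundary endpoints exist, so that particular argument does not transfer verbatim. The fix is the one implicit in your description: in $\Theta_1$ the orbits $\gamma_{x_n}$ form a monotone family of graphs $\t_n(x)$ converging, as $x_n\to0$, to a limit orbit which cannot meet $\t=\pi/2$ at any positive $x$ (else it would coincide with some $\gamma_{x_0}$) and hence, going backward in $s$, must approach $(0,\pi/2)$ as $s\to-\infty$ by the reasoning of Claim~1. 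With that small adjustment your argument is complete and correct.
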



\subsection{The case $\k>0$.}\label{subsec:kpos} In this section we focus on the case $\k>0$ and aim to generalize the case of constant curvature $K>0$. Among the surfaces of positive constant curvature, the most highlighted without any doubt is the round sphere. It is the only complete surface of constant positive curvature, has different characterizations, and has been widely used to prove several results in many geometric frameworks.

Regarding $\k$-surfaces, if $\K(\eta)=\k(\langle\eta,e_3\rangle)$ is rotationally symmetric, a straightforward change of variable in Eq.  \eqref{condicionMinkowski} yields that a necessary and sufficient condition on $\k$ for the existence of a $\k$-sphere is
\begin{equation}\label{condicionMinkowski1dim}
\int_{-1}^1\frac{y}{\k(y)}=0.
\end{equation}
The next result generalizes the classification of constant positive curvature surfaces, provided that $\k>0$ satisfies this integral condition.

\begin{teo}\label{clasificacionkpositiva}
Let be $\k\in C^1([-1,1],\ \k>0,$ satisfying \eqref{condicionMinkowski1dim}. Then, rotational $\k$-surfaces are classified as follows:
\begin{enumerate}
\item A strictly convex sphere intersecting orthogonally the axis of rotation. This sphere is unique among compact, immersed $\k$-surfaces of genus 0 in $\R^3$.
\item A 1-parameter family of compact, non-complete surfaces intersecting the axis of rotation at two cusp points. Moreover, the parameter defining this family is the angle of intersection with the axis of rotation.
\item A 1-parameter family of non-complete annuli, whose ends converge to two circles of singular points. Moreover, the parameter defining this family is the maximum distance to the axis of rotation.
\end{enumerate}
\end{teo}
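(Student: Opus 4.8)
The plan is to integrate the autonomous system \eqref{1ordersys} once, so that the qualitative phase picture of Proposition \ref{comportamientoorbita} becomes quantitative, and then to read off the three families as level sets of the resulting energy. Dividing the two equations of \eqref{1ordersys} on a monotonicity region gives $\sin\t\cos\t\,\df\t=x\,\k(\cos\t)\,\df x$; writing $\nu=\cos\t$ and integrating shows that
$$
E(x,\t):=\frac{x^2}{2}+G(\cos\t),\qquad G(\nu):=\int_0^\nu\frac{t}{\k(t)}\df t,
$$
is constant along every orbit. Since $\k>0$, the function $G$ satisfies $G(0)=0$, $G>0$ elsewhere, is strictly decreasing on $[-1,0]$ and strictly increasing on $[0,1]$, and the Minkowski-type condition \eqref{condicionMinkowski1dim} is precisely $G(1)=G(-1)$. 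I would also record that $\k>0$ forces $\t$ to be strictly increasing along each orbit, and that by \eqref{princcurv} one has $\kappa_1=\t'(s)=x\,\k(\cos\t)/\sin\t>0$ and $\kappa_2=\sin\t/x>0$, so every resulting surface is strictly convex.

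With the first integral in hand the classification reduces to comparing the energy level $c=E$ with the threshold $G(1)=G(-1)$, using that $G(\cos\t)$ ranges in $[0,G(1)]$ while $x\ge0$. I would first treat Item $1$. By Corollary \ref{existenciaorbitamasmenos} (applicable since $\k(1),\k(-1)>0$) there are orbits $\gamma_+$ and $\gamma_-$ with endpoints $(0,0)$ and $(0,\pi)$; both carry energy $G(1)=G(-1)$ and both cross the line $\t=\pi/2$ at the single point $(\sqrt{2G(1)},\pi/2)$, so by the uniqueness in Lemma \ref{lemapropiedades} they coincide in one orbit joining $(0,0)$ to $(0,\pi)$. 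This is exactly where \eqref{condicionMinkowski1dim} enters: without $G(1)=G(-1)$ the orbit issuing from $(0,0)$ fails to reach $(0,\pi)$ and no sphere closes up. The resulting closed profile curve, which is $C^2$ at both poles by Theorem \ref{existenciaradial}, generates a strictly convex rotational sphere meeting the axis orthogonally.

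For Items $2$ and $3$ I would let the energy vary. If $c\in(0,G(1))$, the level set meets $x=0$ at the two angles $\t_0\in(0,\pi/2)$ and $\t_1\in(\pi/2,\pi)$ fixed by $G(\cos\t_0)=G(\cos\t_1)=c$; the corresponding orbit runs from $(0,\t_0)$ to $(0,\t_1)$ in finite arc-length (there the field is $C^1$ and $x'=\cos\t_0\neq0$), giving a compact surface meeting the axis at two cusp points, with $\t_0$ as the intersection-angle parameter of Item $2$. If $c>G(1)$, then $x\ge\sqrt{2(c-G(1))}>0$ along the orbit, which therefore never reaches the axis; as $\t\to0^+$ and $\t\to\pi^-$ one has $x\to x_*:=\sqrt{2(c-G(1))}$ (the two radii coinciding thanks to \eqref{condicionMinkowski1dim}) while $\kappa_1=\t'\to\infty$, so both ends converge to circles of radius $x_*$ made of curvature-blow-up points; the orbit is still traversed in finite arc-length because $\int\frac{\sin\t}{x\,\k(\cos\t)}\df\t$ converges near $\t=0,\pi$. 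Rotating yields the annulus of Item $3$, parametrized by its maximal radius $x_{\max}=\sqrt{2c}>\sqrt{2G(1)}$, and the singular circles belong to no $\k$-surface precisely because the curvature diverges there.

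The monotonicity bookkeeping and the finiteness of arc-length at the endpoints are routine. The genuinely hard step is the uniqueness assertion in Item $1$, namely that the sphere is unique among all compact immersed genus-$0$ $\k$-surfaces and not merely among rotational ones, since the phase plane only produces rotational examples. For this I would invoke an external ingredient: a compact immersed surface with $K=\k(\langle\eta,e_3\rangle)>0$ is an ovaloid (Hadamard--Stoker), and ovaloids with Gauss curvature prescribed as a function of the normal are unique up to translation by the uniqueness in the Minkowski problem; as the datum $\K$ is rotationally symmetric, the unique ovaloid must itself be rotational and hence coincide with the sphere built above. I expect this global uniqueness, rather than any phase-plane computation, to be the main obstacle.
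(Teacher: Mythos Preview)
Your argument is correct and takes a genuinely different route from the paper. The paper works purely qualitatively: it invokes the Minkowski existence theorem together with Alexandrov reflection to place the sphere in the phase plane as an orbit $\gamma_+$ joining $(0,0)$ to $(0,\pi)$, and then disposes of Items~2 and~3 by the non-intersection of orbits and the monotonicity directions of Proposition~\ref{comportamientoorbita}, without ever writing down a conserved quantity. You instead integrate the system once and obtain the explicit first integral $E(x,\t)=x^2/2+G(\cos\t)$, which turns the classification into reading off level sets and makes the role of condition~\eqref{condicionMinkowski1dim} completely transparent as $G(1)=G(-1)$. Your approach is more elementary and more informative: it yields the radius $\sqrt{2G(1)}$ of the sphere's equator, shows that the two singular circles in Item~3 have the \emph{same} radius $\sqrt{2(c-G(1))}$ (a fact the paper does not state), and gives clean finite-arc-length estimates at the boundary. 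The paper's qualitative method, by contrast, transfers verbatim to settings where no first integral is available. For the global uniqueness in Item~1 you quote Hadamard--Stoker plus Minkowski uniqueness, whereas the paper cites the G\'alvez--Mira theorem on Weingarten spheres; both are valid external inputs here, and your version is arguably the more classical one since $K>0$ already forces embeddedness.
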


\begin{proof}
Since $\k>0$, the structure of the phase plane is as follows: there exist two monotonicity regions $\Theta_1=\{\t<\pi/2\}$ and $\Theta_2=\{\t>\pi/2\}$ where an orbit $\gamma=(x,\t)$ satisfies $x'>0$ and $x'<0$, respectively. Since $\k>0$ it follows that $\t'>0$ at both regions.

The existence of a strictly convex sphere follows immediately from the Minkowski theorem \cite{Min}. The fact that the sphere is rotational follows by applying Alexandrov reflection technique with respect to vertical planes. The uniqueness of this sphere follows from the outstanding work of \cite{GaMi2}, see Th. 1.6. Therefore, there exists an orbit $\gamma_+$ having $(0,0)$ and $(0,\pi)$ as endpoints. As a matter of fact, this orbit intersects the line $\t=\pi/2$ at some $(x_+,\pi/2)$ with $x_+>0$. See Fig. \ref{fig:clasipos}, the orbit in blue.

Next, fix some $\t_0\in(0,\pi/2)$. Then, there exists an orbit $\gamma_{\t_0}$ having $(0,\t_0)$ as endpoint. Since $\gamma_{\t_0}$ and $\gamma_+$ cannot intersect each other, $\gamma_{\t_0}$ ends up at the line $x=0$. The corresponding $\k$-surface is compact and intersects the axis of rotation at two cusp points. See Fig. \ref{fig:clasipos}, the orbit in orange.

Finally, fix some $x_0>x_+$. The orbit $\gamma_{x_0}$ passing through $(x_0,\pi/2)$ must converge to the lines $\t=0$ and $\t=\pi$, without intersecting them, and with decreasing $x$-coordinate. Therefore, the corresponding $\k$-surface is homeomorphic to an annulus whose ends converge to circles of singular points. Moreover, the maximum distance of this annulus to the axis of rotation is precisely $x_0$. See Fig. \ref{fig:clasipos}, the orbit in purple.

This completes the proof of Th. \ref{clasificacionkpositiva}.
\end{proof}

\begin{figure}[h]
\centering
\includegraphics[width=.45\textwidth]{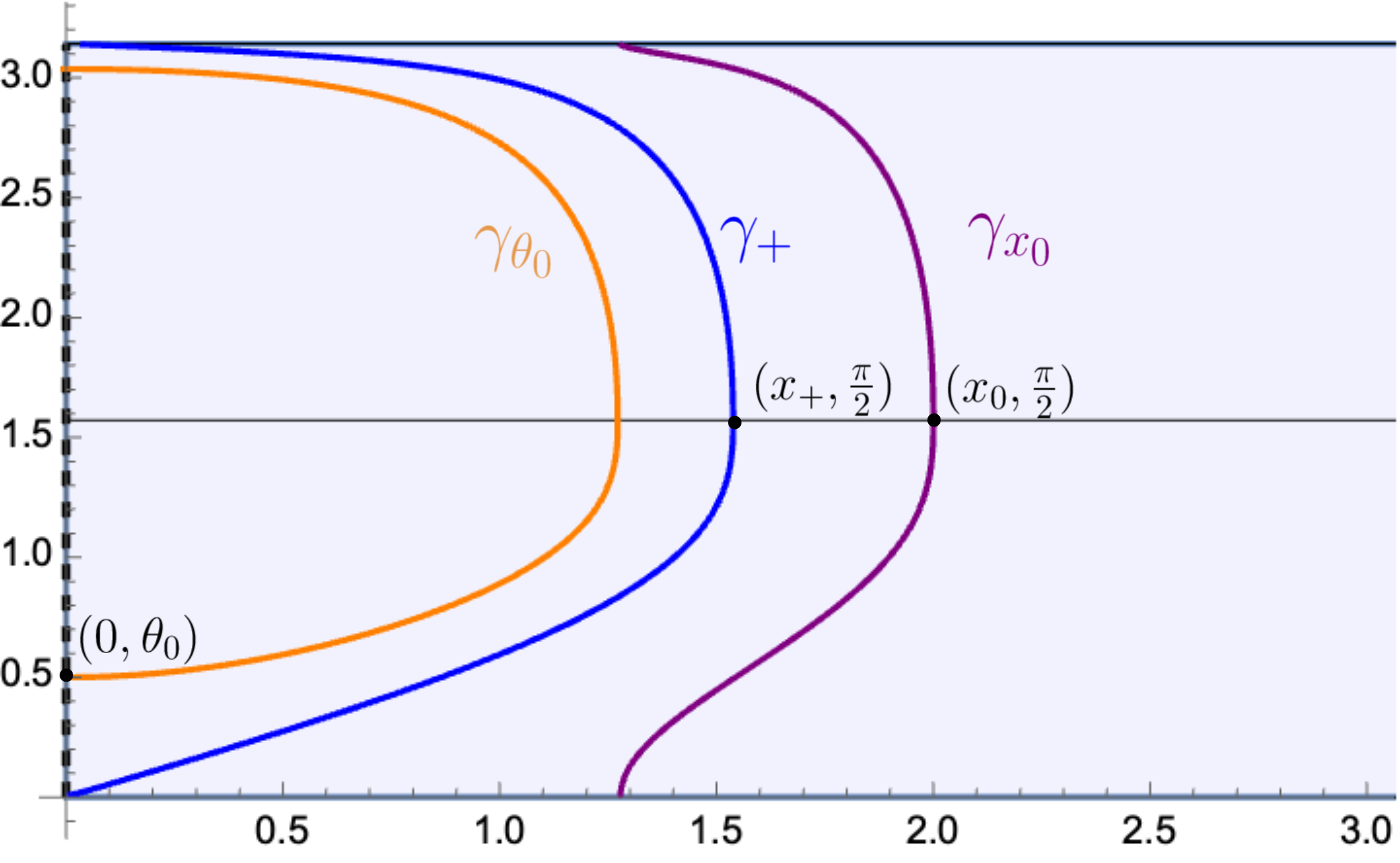}\hspace{1cm}\includegraphics[width=.21\textwidth]{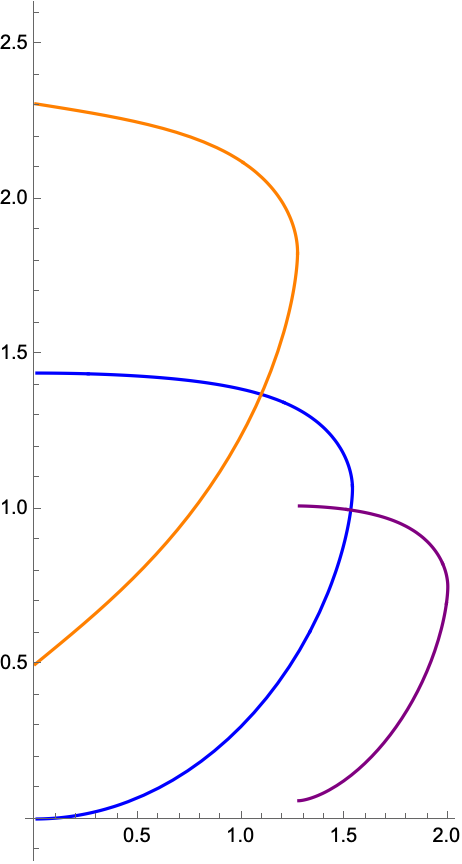}
\caption{Left: the phase plane and the different orbits for a function $\k>0$ such that Eq. \eqref{condicionMinkowski1dim} holds. Note the lack of symmetry of the phase plane with respect to $\t=\pi/2$, since $\k$ fails to be even. Right: the profile curve associated to each orbit.}
\label{fig:clasipos}
\end{figure}

For the case that the integral condition \eqref{condicionMinkowski} fails, the different possible behaviors of $\k$ determine the properties of the corresponding $\k$-surfaces. For example, it can be proved (although details are skipped at this point) that the $\k$-surfaces for the function $\k(y)=y+2$ are as described next:
\begin{enumerate}
\item The orbit starting at $(0,0)$ ends up at some $(0,\t_0)$ with $\t_0>\pi/2$. Therefore, the corresponding $\k$-surface has a singular cusp point at the axis of rotation.
\item The orbit ending at $(0,\pi)$ starts at some $(x_0,0)$, with $x_0>0$, without intersecting it. Therefore, the corresponding $\k$-surface converges to a circle of singular points.
\item Any other orbit intersecting the line $x=0$ generates a $\k$-surface intersecting the axis of rotation at a cusp point.
\item Any other orbit not being one of the aforementioned ones generates a $\k$-surface homeomorphic to an annulus whose ends converge to circles of singular points.
\end{enumerate}
In particular, none of these $\k$-surfaces are complete, in contrast with the case of positive, constant curvature.

\section{The asymptotic behavior of complete rotational $\k$-graphs}\label{sec:asympt}
In Section \ref{subsec:kanula} we proved that if $\k(y)>0,\ \forall y\in(0,1)$ and $\k(0)=0$ then the rotational $\k$-surface intersecting orthogonally the axis of rotation is a strictly convex graph that either converges to a flat cylinder or is entire. In this section, we study in detail this behavior.

Let $u:[0,R_0)\rightarrow\R,\ R_0\leq\infty$ be a $C^2$ positive function, and let $\sig_u$ be the surface obtaining by rotating the graph $(x,0,u(x))$ around the $z$-axis. Our first goal is to prove that surfaces obtained by rotating polynomials $x^n,\ n\geq2$, are $\k$-surfaces for adequate choices of the prescribed function.

\begin{pro}\label{comportamientopolinomios}
Let be $u_n(x)=x^n,\ n\geq2$, and consider $\sig_{u_n}$ the upwards oriented surface in $\r3$ obtained by rotating the graph of $u_n(x)$ around the $z$-axis. Then, $\sig_{u_n}$ is a $\k_n$-surface for the prescribed function
$$
\k_n(y)=(n-1)y^{\frac{2n}{n-1}}(1-y^2)^{\frac{n-2}{n-1}}.
$$
\end{pro}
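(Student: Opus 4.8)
The plan is to verify the defining identity \eqref{kcomografo} directly for the choice $u=u_n$. A graph $(x,0,u(x))$ rotated about the $z$-axis is a $\k$-surface exactly when its Gauss curvature — the left-hand side of \eqref{kcomografo} — equals $\k$ evaluated at the angle function $\nu(x)=1/\sqrt{1+u'(x)^2}$. So it suffices to compute the Gauss curvature of $\sig_{u_n}$ as an explicit function of $x$, and then re-express it as a function of $\nu$ alone, checking that the outcome has exactly the form $\k_n(\nu)$ claimed in the statement.

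First I would compute $u_n'(x)=n\,x^{n-1}$ and $u_n''(x)=n(n-1)\,x^{n-2}$, and substitute into the left-hand side of \eqref{kcomografo}. After cancelling one power of $x$ this collapses to
$$
\frac{u_n'u_n''}{x(1+u_n'^2)^2}=\frac{n^2(n-1)\,x^{2n-4}}{\left(1+n^2x^{2n-2}\right)^2}.
$$
Next I would invert the angle function. From $\nu=1/\sqrt{1+n^2x^{2n-2}}$ one reads off $1+n^2x^{2n-2}=\nu^{-2}$, hence $(1+n^2x^{2n-2})^2=\nu^{-4}$ and $x^{2n-2}=(1-\nu^2)/(n^2\nu^2)$. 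The key observation that makes the computation close up is that the leftover factor $x^{2n-4}$ is a (fractional) power of the quantity just solved for, namely $x^{2n-4}=(x^{2n-2})^{(n-2)/(n-1)}$. Substituting these two relations turns the curvature into a product of powers of $\nu$ and of $1-\nu^2$.

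At that point only exponent bookkeeping remains: the power of $\nu$ works out to $4-2\frac{n-2}{n-1}=\frac{2n}{n-1}$ and that of $1-\nu^2$ to $\frac{n-2}{n-1}$, so that $K$ is a constant multiple of $\nu^{2n/(n-1)}(1-\nu^2)^{(n-2)/(n-1)}$; collecting that constant yields $\k_n(\nu)$. Since $\nu=\nu(x)$ decreases from $1$ to $0$ as $x$ runs over $(0,\infty)$, the identity holds for all $\nu\in(0,1)$, and I would close by inspecting the apex: as $x\to0^+$ the graph meets the axis orthogonally with $\nu\to1$, and the relation extends continuously there, so $\sig_{u_n}$ is a $\k_n$-surface in the sense of Definition \ref{def:ksup}.

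I expect the only real obstacle to be precisely this manipulation of fractional exponents — in particular the identity $x^{2n-4}=(x^{2n-2})^{(n-2)/(n-1)}$, which is exactly what makes the non-integer exponents $2n/(n-1)$ and $(n-2)/(n-1)$ of $\k_n$ materialize — rather than anything conceptual; once $x^{2n-2}$ is expressed through $\nu$, the remaining simplification is forced.
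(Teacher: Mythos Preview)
Your proposal is correct and follows essentially the same route as the paper: compute $K_{\sig_{u_n}}$ from \eqref{kcomografo}, write the angle function $\nu_n=1/\sqrt{1+n^2x^{2(n-1)}}$, and eliminate $x$ in favor of $\nu_n$. The only cosmetic difference is that you perform the elimination in one stroke via the fractional-power identity $x^{2n-4}=(x^{2n-2})^{(n-2)/(n-1)}$, whereas the paper substitutes in two steps (first replacing $n^2x^{2(n-1)}$ to reach $K=(n-1)\nu_n^2(1-\nu_n^2)/x^2$, then replacing $x^2$); the exponent bookkeeping and the outcome are identical.
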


\begin{proof}
Let be $u_n(x),\ x\geq0$, and $\sig_{u_n}$ as in the hypothesis of Prop. \ref{comportamientopolinomios}. According to Eq.  \eqref{kcomografo}, the Gauss curvature of $\sig_{u_n}$ is well-defined at $x=0$ if and only if $n\geq2$, and its value is zero. 

Since $u_n(x)$ is strictly convex for $x>0$, $\sig_{u_n}$ is a $\k$-surface for some 1-dimensional function $\k_n(y)$ such that $\k_n(y)>0,\ \forall y\in(0,1)$. We derive next the explicit expression of $\k_n$. The angle function $\nu_n(x)$ of $\sig_{u_n}$ is
\begin{equation}\label{nupoliniomio}
\nu_n(x)=\frac{1}{\sqrt{1+n^2x^{2(n-1)}}},
\end{equation}
and so Eq.  \eqref{kcomografo} reads as
$$
K_{\sig_{u_n}}=n^2(n-1)x^{2(n-2)}\nu_n(x)^4.
$$
Solving $n^2x^{2(n-1)}$ from Eq.  \eqref{nupoliniomio} in terms of $\nu_n$ and substituting yields
$$
K_{\sig_{u_n}}=\frac{(n-1)\nu_n(x)^2(1-\nu_n(x)^2)}{x^2}.
$$
Finally, by solving $x^2$ from Eq.  \eqref{nupoliniomio} in terms of $\nu_n$ and substituting we conclude
$$
K_{\sig_{u_n}}=(n-1)\nu_n(x)^{\frac{2n}{n-1}}(1-\nu_n(x)^2)^{\frac{n-2}{n-1}},
$$
that is, $K_{\sig_{u_n}}$ is expressed in terms of $\nu_n$. Hence, by defining
\begin{equation}\label{kene}
\k_n(y)=(n-1)y^{\frac{2n}{n-1}}(1-y^2)^{\frac{n-2}{n-1}},
\end{equation}
we conclude that $\sig_{u_n}$ is a $\k_n$-surface for the prescribed function $\k_n$.
\end{proof}

\begin{obs}
Recall that these surfaces do not depend $C^1$ on the angle function at the intersection with the axis of rotation. Indeed, at $y=1$ the function $\k_n$ behaves as $(1-y)^{\frac{n-2}{n-1}}$, which fails to be $C^1$. Nevertheless, for the behavior at infinity we are only interested at the behavior of $\k_n$ at $y=0$, where it has $C^1$-regularity.
\end{obs}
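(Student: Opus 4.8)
The plan is to extract both regularity assertions directly from the closed form \eqref{kene} by isolating, near each endpoint, the single factor responsible for any loss of smoothness from the remaining factors, which are smooth and nonvanishing there. Throughout I keep in mind the geometric dictionary supplied by \eqref{nupoliniomio}: since $u_n'(0)=0$ we have $\nu_n(0)=1$, so the value $y=1$ is the angle function at the intersection with the axis of rotation, whereas $u_n'(x)=nx^{n-1}\to\infty$ gives $\nu_n(x)\to0$ as $x\to\infty$, so $y=0$ governs the behavior at infinity. Because the angle function $\nu_n$ takes values in $(0,1]$ along $\sig_{u_n}$ and $K_{\sig_{u_n}}=\k_n(\nu_n)$, the assertion that $\sig_{u_n}$ does not depend $C^1$ on the angle function at the axis is precisely the failure of $\k_n$ to be $C^1$ at the endpoint $y=1$.

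For the endpoint $y=1$ I would factor $1-y^2=(1-y)(1+y)$ in \eqref{kene} and write
$$
\k_n(y)=(n-1)\,y^{\frac{2n}{n-1}}\,(1+y)^{\frac{n-2}{n-1}}\,(1-y)^{\frac{n-2}{n-1}}.
$$
On a one-sided neighbourhood of $y=1$ the first three factors are $C^\infty$ and strictly positive, so $\k_n$ equals a smooth, nonvanishing function times $(1-y)^{\alpha}$ with $\alpha=\tfrac{n-2}{n-1}$; consequently $\k_n$ is $C^1$ at $y=1$ if and only if $(1-y)^{\alpha}$ is. Differentiating produces a term of order $(1-y)^{\alpha-1}$ with $\alpha-1=-\tfrac1{n-1}<0$, which is unbounded as $y\to1^-$ as soon as $\alpha>0$, i.e. for every $n\ge3$; thus $\k_n\notin C^1$ at $y=1$. (The value $n=2$ is the degenerate exception: there $\alpha=0$ and $\k_2(y)=y^4$ is in fact smooth, so the non-$C^1$ claim should be read as pertaining to the generic range $n\ge3$.)

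For the endpoint $y=0$ the roles reverse: now $(1-y^2)^{\frac{n-2}{n-1}}$ is $C^\infty$ and equals $1$ at $y=0$, while the potentially singular factor is $y^{\frac{2n}{n-1}}$. Its exponent is $\beta=\tfrac{2n}{n-1}=2+\tfrac{2}{n-1}>2$, so $y^{\beta}$ and its derivative $\beta y^{\beta-1}$ both vanish and are continuous as $y\to0^+$; hence $\k_n(0)=\k_n'(0)=0$ and $\k_n$ is $C^1$ (indeed $C^2$) at $y=0$, which is all that is required for the asymptotic study carried out in this section. I do not anticipate a substantive obstacle: the argument is a routine comparison of exponents, and the only points requiring care are the clean separation of the smooth factors from the singular power at each endpoint, the book-keeping that matches $y=1$ with the axis and $y=0$ with infinity through \eqref{nupoliniomio}, and the degenerate case $n=2$. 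It is worth recording that this mild failure of regularity at $y=1$ is consistent with Th. \ref{existenciaradial}, which only requires $\k(1)\ge0$ rather than full $C^1$-regularity at that endpoint.
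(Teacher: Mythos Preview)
Your argument is correct and is precisely the justification the paper has in mind: the observation in the paper is stated without proof, simply asserting that $\k_n$ behaves like $(1-y)^{\frac{n-2}{n-1}}$ near $y=1$ and is $C^1$ at $y=0$, and your factorisation and exponent comparison make this explicit. Your remark on the degenerate case $n=2$ is a genuine refinement the paper omits.
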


Note that $\k_n(0)=0$ and so $K_{\sig_{u_n}}\rightarrow0$ as $x\rightarrow\infty$. Furthermore, $\k_n(y)$ behaves at $y=0$ as 
$$
\lim_{y\rightarrow0}\frac{\k_n(y)}{y^{\boldsymbol{\alpha}(n)}}=n-1>0,\hs5 \boldsymbol{\alpha}(n):=\frac{2n}{n-1}.
$$

The following result gives an explicit expression for rotational graphical $\k$-surfaces when $\k(y)=y^\alpha$, and its proof follows easily by explicit integration of Eq. \eqref{kcomografo}.
\begin{pro}\label{comportamientoyelevadoalpha}
Let be $\k(y)=y^\alpha,\ \alpha>0$, and $u_\alpha(x),\ x\geq0$, the function defining the graph of the rotational $\k$-surface intersecting orthogonally the axis of rotation with upwards orientation.
\begin{itemize}
\item[1.] If $\alpha>2$, then
$$
u_\alpha(x)=\int_0^x\sqrt{\left(\frac{\alpha-2}{2}t^2+1\right)^{\frac{2}{\alpha-2}}-1}\ dt.
$$
In particular, $u_\alpha(x)$ is defined for every $x\geq0$, $\sig_{u_\alpha}$ is an entire graph, and
$$
\lim_{x\rightarrow\infty}\frac{u_\alpha(x)}{x^{\boldsymbol{n}(\alpha)}}=C>0,\hs5 \boldsymbol{n}(\alpha):=\frac{\alpha}{\alpha-2}.
$$
\item[2.] If $\alpha=2$, then
$$
u_2(x)=\int_0^x\sqrt{e^{\frac{t^2}{2}}-1}\ dt.
$$
In particular, $u_2(x)$ is defined for every $x\geq0$ and $\sig_{u_2}$ is an entire graph.
\item[3.] If $\alpha<2$, then
$$
u_\alpha(x)=\int_0^x\sqrt{\left(\frac{2R_\alpha}{2R_\alpha-t^2}\right)^{\frac{2-\alpha}{2}}-1}\ dt,
$$
where $R_\alpha:=1/(2-\alpha)$. In particular, $u_\alpha(x)$ is defined for $x\in[0,\sqrt{2R_\alpha})$ and $\sig_{u_\alpha}$ converges to the flat cylinder of radius $\sqrt{2R_\alpha}$ and vertical rulings.
\end{itemize}
\end{pro}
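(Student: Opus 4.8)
The plan is to integrate Eq. \eqref{kcomografo} explicitly, exactly as announced, by passing to the first integral already obtained in Section \ref{subsec:radialsols}. For the upwards oriented radial graph intersecting the axis orthogonally, that first integral reads $\left(u'^2/(1+u'^2)\right)'=2x\,\k(\nu)$ with $u'(0)=0$, where $\nu=1/\sqrt{1+u'^2}$ is the angle function. Specializing $\k(y)=y^\alpha$ turns the right-hand side into $2x\,\nu^\alpha$, and since $u'^2/(1+u'^2)=1-\nu^2$, the equation becomes autonomous in the single unknown $w:=1-\nu^2\in[0,1)$, namely $w'=2x(1-w)^{\alpha/2}$ with $w(0)=0$. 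This is separable, which is the whole point.

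Then I would separate variables and integrate from $0$, the three regimes of the statement arising solely from the antiderivative of $(1-w)^{-\alpha/2}$. For $\alpha\neq2$ it is a power, and a short computation gives the clean relation $\nu^{2-\alpha}=1-\tfrac{2-\alpha}{2}x^2$; for $\alpha=2$ it is logarithmic and gives $\nu^2=e^{-x^2}$. In each case one solves for $1+u'^2=\nu^{-2}$, takes the positive root (consistent with the upwards orientation, so $u'\ge0$), and integrates once more from $0$ to recover the closed forms for $u_\alpha$ displayed in Prop. \ref{comportamientoyelevadoalpha}. That the function built this way really is the (unique) solution with $u(0)=u'(0)=0$, and in particular is $C^2$ up to $x=0$, is already furnished by Th. \ref{existenciaradial} (applicable since $\k(1)=1>0$); thus it only remains to check by direct differentiation that the explicit formula satisfies \eqref{kcomografo}.

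The qualitative conclusions then come from reading off the domain of the radicand. For $\alpha\ge2$ the base $1-\tfrac{2-\alpha}{2}x^2$ (resp. $e^{-x^2}$) is positive for all $x\ge0$, so $u_\alpha$ is entire and $\sig_{u_\alpha}$ is a complete graph. For $\alpha<2$, writing $1-\tfrac{2-\alpha}{2}x^2=(2R_\alpha-x^2)/(2R_\alpha)$ with $R_\alpha=1/(2-\alpha)$, the base vanishes at $x=\sqrt{2R_\alpha}$, where $\nu\to0$ and hence $u'\to\infty$; this forces the maximal domain $[0,\sqrt{2R_\alpha})$ and a vertical tangent at the right endpoint, so $\sig_{u_\alpha}$ converges to the flat cylinder of radius $\sqrt{2R_\alpha}$. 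Finally, for the growth of the entire graphs when $\alpha>2$, I would extract the leading power $u_\alpha'(x)\sim c\,x^{2/(\alpha-2)}$ from the dominant term of the radicand and then conclude $u_\alpha(x)/x^{\boldsymbol{n}(\alpha)}\to C>0$, with $\boldsymbol{n}(\alpha)=1+2/(\alpha-2)=\alpha/(\alpha-2)$, by applying L'Hôpital's rule to $\int_0^x u_\alpha'$.

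None of this is deep, so the ``difficulty'' is mainly careful bookkeeping rather than any single hard step. The points that deserve attention are: (i) checking a priori that $w$ stays in $[0,1)$ throughout the interval of existence, so that the fractional powers and the square root are well defined and the sign choice $u'\ge0$ persists; (ii) the endpoint analysis at $x=\sqrt{2R_\alpha}$ when $\alpha<2$, where one must verify that $u'$ diverges while $u$ remains finite, so that the profile genuinely closes onto the cylinder instead of ending abruptly; and (iii) pinning down the precise exponent $\boldsymbol{n}(\alpha)$ together with the strict positivity of the constant $C$, which is the only place where a limiting argument, rather than a bare integration, is required.
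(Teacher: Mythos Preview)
Your proposal is correct and follows exactly the approach the paper indicates: the paper's entire proof is the one-line remark that the result ``follows easily by explicit integration of Eq.~\eqref{kcomografo}'', and you carry out precisely that integration via the first integral $\bigl(u'^2/(1+u'^2)\bigr)'=2x\,\k(\nu)$ and the separable substitution $w=1-\nu^2$. Your domain discussion for each regime and the L'H\^opital argument for the growth exponent $\boldsymbol{n}(\alpha)$ simply make explicit what the paper leaves to the reader.
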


We emphasize that Prop. \ref{comportamientopolinomios} and \ref{comportamientoyelevadoalpha} are in some sense back and forth. Specifically:
\begin{itemize}
\item Given $n>2$, the rotation of the graph $u_n(x)=x^n$ is a $\k$-surface for the function $\k_n$ given by \eqref{kene}, which has the same behavior at $y=0$ as the function $\k(y)=y^{\boldsymbol{\alpha}(n)}$, where $\boldsymbol{\alpha}(n)=2n/(n-1)$.
\item Given $\alpha>2$ and the function $\k(y)=y^\alpha$, the rotational $\k$-surface intersecting orthogonally the axis of rotation is given by the graph of the function $u_\alpha(x)$ defined in Item $\mathit{1}$. of Prop. \ref{comportamientoyelevadoalpha}, which behaves for $x\rightarrow\infty$ as the polynomial $x^{\boldsymbol{n}(\alpha)}$, where $\boldsymbol{n}(\alpha)=\alpha/(\alpha-2)$
\item The functions $\boldsymbol{n}$ and $\boldsymbol{\alpha}$ are one inverse of the other, i.e. $\boldsymbol{n}(\boldsymbol{\alpha}(n))=n$ and $\boldsymbol{\alpha}(\boldsymbol{n}(\alpha))=\alpha$.
\end{itemize}

\def\refname{References}

\vspace{1cm}

The first author was partially supported by P18-FR-4049. For the second author, this research is a result of the activity developed within the framework of the Programme in Support of Excellence Groups of the Región de Murcia, Spain, by Fundación Séneca, Science and Technology Agency of the Región de Murcia. Irene Ortiz was partially supported by MICINN/FEDER project PGC2018-097046-B-I00 and Fundación Séneca project 19901/GERM/15, Spain.
\end{document}